\documentclass[preprint,1p]{elsarticle}

\makeatletter
 \def\ps@pprintTitle{%
 	\let\@oddhead\@empty
 	\let\@evenhead\@empty
 	\def\@oddfoot{\footnotesize\itshape
 		{} \hfill\today}%
 	\let\@evenfoot\@oddfoot
 }
\makeatother
\usepackage[unicode]{hyperref}

\usepackage{latexsym}
\usepackage{indentfirst}
\usepackage{amsxtra}
\usepackage{amssymb}
\usepackage{amsthm}
\usepackage{amsmath}
\usepackage{mathrsfs} 

\usepackage{xcolor}
\usepackage{color}

\usepackage{amsfonts}

\usepackage[capitalise]{cleveref}

\bibliographystyle{elsart-num-sort}

\newtheorem{theor}{Theorem}
\newtheorem*{theor*}{Theorem}
\newtheorem{prop}[theor]{Proposition}
\newtheorem{lemma}[theor]{Lemma}
\newtheorem{cor}[theor]{Corollary}
\newtheorem*{cor*}{Corollary}
\theoremstyle{definition}               
\newtheorem{defin}[theor]{Definition}
\newtheorem{ex}{Example}
\newtheorem{exs}[ex]{Examples}

\newtheorem{que}{Question}



\DeclareMathOperator{\End}{End}
\DeclareMathOperator{\id}{id}

\DeclareMathOperator{\tr}{tr}
\DeclareMathOperator{\Ker}{Ker}

\DeclareMathOperator{\E}{E}



\usepackage[makeroom]{cancel}
 
\usepackage{soul}
\usepackage{tikz}


\begin{document}

\begin{frontmatter}
	\title{Set-theoretical solutions of the pentagon equation\\ on Clifford semigroups\tnoteref{mytitlenote}}

 	\tnotetext[mytitlenote]{This work was partially supported by the Dipartimento di Matematica e Fisica ``Ennio De Giorgi'' - Università del Salento and the Departament de Matemàtiques - Universitat de València. The first and the third authors are members of GNSAGA (INdAM) and of the non-profit association ADV-AGTA.}
 		\author[unile]{Marzia~MAZZOTTA}
	\ead{marzia.mazzotta@unisalento.it}
	\author[unied]{Vicent P\'EREZ-CALABUIG}
	\ead{vicent.perez-calabuig@uv.es}
	\author[unile]{Paola~STEFANELLI}
 	\ead{paola.stefanelli@unisalento.it}
 	\address[unile]{Dipartimento di Matematica e Fisica ``Ennio De Giorgi'',
 		\\
		Universit\`{a} del Salento,\\
 		Via Provinciale Lecce-Arnesano, \\
 		73100 Lecce (Italy)\\}
 		\address[unied]{Departament de Matemàtiques de València,\\ Dr. Moliner, 50, \\46100 Burjassot, València (Spain)}

\begin{abstract} 
Given a set-theoretical solution of the pentagon equation $s:S\times S\to S\times S$ on a set $S$ and writing $s(a, b)=(a\cdot b,\, \theta_a(b))$, with $\cdot$ a binary operation on $S$ and $\theta_a$ a map from $S$ into itself, for every $a\in S$, one naturally obtains that $\left(S,\,\cdot\right)$ is a semigroup.\\ 
In this paper, we focus on solutions on Clifford semigroups $\left(S,\,\cdot\right)$ satisfying special properties on the set of the idempotents $\E(S)$. 
Into the specific, we provide a complete description of \emph{idempotent-invariant solutions}, namely, those solutions for which $\theta_a$ remains invariant in $\E(S)$, for every $a\in S$. 
Moreover, considering $(S,\,\cdot)$ as a disjoint union of groups, we construct a family of \emph{idempotent-fixed solutions},  i.e., those solutions for which $\theta_a$ fixes every element in $\E(S)$, for every $a\in S$, starting from a solution on each group.
\end{abstract}
 \begin{keyword}
pentagon equation \sep set-theoretical solution \sep inverse semigroups\sep Clifford semigroups
 \MSC[2022] 16T25\sep 81R50\sep 20M18 
 \end{keyword}

\end{frontmatter}

\section*{Introduction}
If $V$ is a vector space over a field $F$, a linear map $\mathcal{S}: V \otimes V \to V \otimes V$ is said to be a \emph{solution of the pentagon equation} on $V$ if it satisfies the relation
	\begin{align}\label{vec_equa}
	\mathcal{S}_{12}\mathcal{S}_{13}\mathcal{S}_{23}=\mathcal{S}_{23}\mathcal{S}_{12},
	\end{align}
	where 
		$\mathcal{S}_{12}=\mathcal{S}\otimes \id_V$,\,$\mathcal{S}_{23}=\id_V\otimes \, \mathcal{S}$,\,$\mathcal{S}_{13}=(\id_V\otimes\, \Sigma)\,\mathcal{S}_{12}\;(\id_V\otimes \, \Sigma)$,
	with $\Sigma$ the flip operator on $V\otimes V$, i.e., $\Sigma(u\otimes v)=v\otimes u$, for all $u,v\in V$. 
    The pentagon equation arose at first at the beginning of '80 in \cite{Bi81} as the Biedenharn-Elliott identity for Wigner $6j-$symbols and Racah coefficients in the representation theory for the rotation group. Maillet \cite{Ma94} showed that solutions of the pentagon equation lead to solutions of the tetrahedron equation \cite{Za80}, a generalization of the well-known quantum Yang-Baxter equation \cite{Ya67, Ba72}. Moreover, in \cite[Theorem 3.2]{Mi04}, Militaru showed that bijective solutions on finite vector spaces are equivalent to finite Hopf algebras, and so the classification of the latter is reduced to the classification of solutions.
    In the subsequent years, the pentagon equation appeared in literature in several forms with
    different terminologies according to the specific research areas. We highlight some interesting works as \cite{Dr89,    Za92,   MaNaYo94, VDVK94, Ka96, Wo96,  BaSk98, Str98, Ka99, BaSk03, Mi04, JiLi05, Fu10}, just to name a few.
    For a fuller treatment of some applications in which the pentagon equation appears, we suggest the recent paper by Dimakis and M\"uller-Hoissen \cite{DiMu15} (along with the references therein), where the authors dealt with an infinite family of equations named \emph{polygon equations}.
    
    As well as Drinfel'd in \cite{Dr92} translated the study of solutions of the Yang-Baxter equation into set-theoretical terms, Kashaev and Sergeev in \cite{KaSe98} began  the study of the pentagon equation with a set-theoretical approach. Namely, if $S$ is a set, a map $s:S\times S\to S\times S$ satisfying the following ``reversed" relation
 \begin{equation}\label{set_equa}
	s_{23}s_{13}s_{12}=s_{12}s_{23},
\end{equation}
where $s_{12}=s\times \id_S$,  $s_{23}=\id_S\times s$, $s_{13}=(\id_S\times \tau)\,s_{12}\,(\id_S\times \tau)$, and $\tau(a,b)=(b,a)$, for all $a,b\in S$,
is said to be a \emph{set-theoretical solution of the pentagon equation}, or briefly \emph{solution}, on $S$. If, in particular, $s$ is a solution on a finite set $S$, then the linear map $\mathcal{S}:F^{S \times S} \to F^{S \times S}$ defined by $\mathcal{S}(f)(a,b)= f(s(a,b))$, for all $a,b \in S$, is a solution of \eqref{vec_equa} on the vector space $F^S$ of all functions from $S$ to $F$.\\
For their purposes, the authors in \cite{KaSe98} investigated only bijective maps. This  class of solutions was also studied by Kashaev and Reshetikhin in \cite{KaRe07}, where it is shown that each symmetrically factorizable Lie group is related to a bijective solution. Among these solutions, a description of all those that are involutive, i.e., $s^2=\id_{S\times S}$, has been recently given by Colazzo, Jespers, and Kubat in \cite{CoJeKu20}. \\
As one can see in \cite[Proposition 8]{CaMaMi19}, any arbitrary solution $s$ on a set $S$ can be written as $s(a, b)=(a\cdot b,\, \theta_a(b))$, with $\cdot$ a binary operation on $S$ and $\theta_a$ a map from $S$ into itself, for every $a\in S$.  In this way, $S$ is inherently endowed with a structure of a semigroup $\left(S,\,\cdot\right)$ and it appears natural the study of solutions on specific classes of semigroups. For brevity, we will denote the multiplication in $S$ as a concatenation. In this vein, in \cite[Theorem 15]{CaMaMi19} the authors provide a description of all solutions on a group, by showing that they are determined by its normal subgroups. Moreover, in \cite{CaMaSt20}, we can find several constructions of solutions on semigroups, such as on the matched product of two semigroups, that is a semigroup including the classical Zappa-Sz\'ep product. In the same paper, the authors investigate maps that are both solutions of the pentagon and the Yang-Baxter equations \cite{Dr92}. Furthermore, in \cite{Maz23x}, the first author study the idempotent solutions, namely, maps satisfying the property $s^2=s$, and describes this kind of solutions on monoids having central idempotents.

In this paper, we begin the study of solutions on Clifford semigroups, namely, inverse semigroups whose idempotent elements are central. Recalling that a semigroup $S$ is inverse if, for each 
$a\in S$, there exists a unique $a^{-1}\in S$ 
satisfying $aa^{-1}a = a$ and $a^{-1}aa^{-1} = a^{-1}$, 
it is clear that the behaviour of Clifford semigroups is very close to that of groups. 
In light of this fact and the description of solutions on groups in \cite{CaMaMi19}, it is natural to wonder if a description of solutions can be obtained also on this class of semigroups. However, such an aim appears challenging and some considerations on the set of solutions must be considered. It is easy to check that every solution on a group $G$ satisfies that $\theta_a(1) = 1$, for every $a\in G$. Therefore, it motivates us to  consider both classes of solutions on a Clifford semigroup $S$ such that $\theta_a$, respectively, fixes every idempotent or remains invariant on every idempotent, for every $a\in S$. We call them, respectively, \emph{idempotent-fixed} and \emph{idempotent-invariant} solutions. \\
The main results of this paper are the following. Firstly, we provide a complete description of the first class of solutions on a Clifford semigroup $S$, which includes that made in the context of groups. To this aim, we introduce the \emph{kernel} of an arbitrary solution on $S$, which turns out to be a normal subsemigroup, that is a subsemigroup containing the idempotents and closed by conjugation. Secondly, for the second class, considering that any Clifford semigroup is a union of a family of pairwise disjoint groups $\{G_e\}_{e\in \E(S)}$, we give a construction of solutions obtained starting from a solution on each group $G_e$.

\bigskip

\section{Preliminaries}
The aim of this section is to briefly introduce some basics of set-theoretical solutions of the pentagon equation. Initially, we recall some notions related to Clifford semigroups useful for our purposes. For a fuller treatment of this topic, we refer the reader to \cite{ClPr61} and \cite{La98}.

\subsection{Basics on Clifford semigroups}

Recall that $S$ is an \emph{inverse semigroup} if for each $a\in S$ there exists a unique $a^{-1}\in S$ such that $a = aa^{-1}a$ and $a^{-1} = a^{-1}aa^{-1}$. They hold $(a b)^{-1}=b^{-1} a^{-1}$ and $(a^{-1})^{-1}=a$, for all $a,b \in S$. Moreover, $\E(S) = \{\,aa^{-1} \ | \ a\in S\,\} 
   = \{\,a^{-1}a \ | \ a\in S\,\}$
and one can consider the following natural partial order relation
\begin{align*}
\forall \ e,f\in \E(S)\qquad e\leq f \ \Longleftrightarrow \ e = ef = fe.
\end{align*}
An inverse semigroup $S$ is \emph{Clifford} if $aa^{-1}=a^{-1}a$, for any $a\in S$, or, equivalently, the idempotents are central in the sense that commute with every element in $S$.

Given a Clifford semigroup $S$, we introduce the following relations and the properties involved themselves. They are an easy  consequence of the fact that all Green's relations coincide in $S$ and they characterize the structure of $S$ itself. If $a,b\in S$, we define
\begin{enumerate}
\item $a \leq b$ if, and only if, $aa^{-1}\leq bb^{-1}$, which is an extension of the natural partial order in $S$;
\item $a \, \mathcal{R} \, b$ if, and only if, $a \leq b$ and $b \leq a$.
\end{enumerate}
It follows that $\leq$ is a preorder on $S$  and $\mathcal{R}$ is an equivalence relation on $S$ such that  $$G_{aa^{-1}}:=[a]_\mathcal{R} = \{b \in S\, \mid \, bb^{-1} = aa^{-1}\}$$ is a group with identity $aa^{-1}$, for every $a \in S$.  On the other hand, for all $a, b \in S$, 
\begin{align}
\label{leq-prod}
a \leq b \,\Longleftrightarrow\, \exists\, u \in S\quad a=ub \,\,\vee \,\, a=bu.
\end{align} 
Moreover, $\leq$ induces an order relation on the equivalence classes of $\mathcal{R}$,  namely, for all $e,f\in \E(S)$, $G_e \leq G_f$ if, and only if, $e \leq f$. The following theorem describes Clifford semigroups.
\begin{theor}\label{theo_Clifford}
Let $S$ be a Clifford semigroup. Then, 
\begin{enumerate}
    \item $S$ is a union of a family of pairwise disjoint groups $\{G_e\}_{e\in \E(S)}$;
    \item the map $\varphi_{f,e} \colon G_f \rightarrow G_e$ given by $\varphi_{f,e}(b) = eb$, for every $b \in G_f$, is a group homomorphism, for all $e,f\in \E(S)$ such that $e \leq f$;
    \item for all $e,f,g \in \E(S)$ such that $e \leq f \leq g$, then $\varphi_{g,e} = \varphi_{f,e}\varphi_{g,f}$.
\end{enumerate}
\end{theor}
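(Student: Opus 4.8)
The plan is to prove the three structural facts about a Clifford semigroup $S$ by exploiting that the idempotents are central and that every element $a$ lies in the group $G_{aa^{-1}}$ determined by its $\mathcal{R}$-class. Part (1) is essentially already assembled from the preliminary discussion: each element $a\in S$ belongs to the set $G_{aa^{-1}}=\{b\in S\mid bb^{-1}=aa^{-1}\}$, so $S=\bigcup_{e\in\E(S)}G_e$; these classes are disjoint because they are the equivalence classes of $\mathcal{R}$; and each $G_e$ is a group with identity $e$ since for $b\in G_e$ we have $bb^{-1}=b^{-1}b=e$ as the two-sided identity, with $b^{-1}\in G_e$ the inverse. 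The only thing to verify carefully is that $G_e$ is closed under multiplication, which follows from centrality of idempotents: if $b,c\in G_e$ then $(bc)(bc)^{-1}=bcc^{-1}b^{-1}=beb^{-1}=bb^{-1}e=e$, using $cc^{-1}=e$ and that $e$ commutes with $b^{-1}$.

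For part (2), I would first check that $\varphi_{f,e}$ is well-defined as a map into $G_e$, i.e.\ that $(eb)(eb)^{-1}=e$ for $b\in G_f$ with $e\le f$. Using centrality, $(eb)(eb)^{-1}=ebb^{-1}e=efe=e$, where $bb^{-1}=f$ and $e\le f$ gives $ef=e$. To see it is a homomorphism, for $b,c\in G_f$ I compute $\varphi_{f,e}(b)\varphi_{f,e}(c)=ebec=e\,e\,bc=e(bc)=\varphi_{f,e}(bc)$, again moving the central idempotent $e$ past $b$ and collapsing $ee=e$. This is the step where centrality does all the work, so the writing must make each commutation explicit.

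For part (3), with $e\le f\le g$ and $b\in G_g$, I would simply compute both sides on $b$: $\varphi_{f,e}\varphi_{g,f}(b)=\varphi_{f,e}(fb)=e(fb)=(ef)b=eb$, using associativity and $ef=e$ (from $e\le f$), while $\varphi_{g,e}(b)=eb$ directly. Hence the two maps agree on every element of $G_g$, giving $\varphi_{g,e}=\varphi_{f,e}\varphi_{g,f}$. The main obstacle, if any, is purely bookkeeping: one must keep the partial-order inequalities among idempotents straight (which of $ef=e$ versus $ef=f$ applies) and invoke centrality at exactly the right moments; there is no deep difficulty once the idempotents are known to be central and the $G_e$ are identified as the $\mathcal{R}$-classes, so I expect the proof to be short and computational.
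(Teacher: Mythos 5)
Your argument is correct and coincides with the standard one: the paper states this structure theorem without proof (it is the classical Clifford decomposition, cf.\ \cite{ClPr61,La98}), and your direct verification---that $G_e=\{b\in S\mid bb^{-1}=e\}$ is a group with identity $e$, that closure and the well-definedness and multiplicativity of $\varphi_{f,e}$ follow from centrality of idempotents, and that $\varphi_{f,e}\varphi_{g,f}(b)=e(fb)=(ef)b=eb=\varphi_{g,e}(b)$ using $ef=e$---is exactly the usual computation. The only step you leave implicit is that $e\le f\le g$ forces $e\le g$, needed for $\varphi_{g,e}$ to be defined at all, but this is immediate from $eg=(ef)g=e(fg)=ef=e$ (and symmetrically $ge=e$), so there is no genuine gap.
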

As a consequence of the previous theorem, the product in Clifford semigroups can be written by means of the group homomorphisms $\varphi_{e,f}$, namely,
\begin{align*}
    ab = (ae)(fb) = (efa)(efb) =  \varphi_{e, ef}\left(a\right)\varphi_{f, ef}\left(b\right)\in G_{ef},
\end{align*}
for all $a\in G_e$, $b\in G_f$. In particular, for all $a \in S$, $e \in \E(S)$ such that $a \leq  e$, then $ae = ea = a$.

\noindent For the sake of completeness, the converse of \cref{theo_Clifford} is also true.

\medskip

\subsection{Basics on solutions}

Kashaev and Sergeev \cite{KaSe98} first dealt with solutions from an algebraic point of view. Recently, the study of these solutions has been recovered in \cite{CaMaMi19, CaMaSt20, CoJeKu20, Maz23x}. Following the notation introduced in these works, given a set $S$ and a map $s$ from $S \times S$ into itself, we will write
\begin{center}
	$s(a,b):=(a b, \,\theta_a(b))$,
\end{center}
for all $a,b \in S$, where $\theta_a$ is a map from $S$ into itself, for every $a \in S$. 
Then, $s$ is briefly a \emph{solution} on $S$ if, and only if, the following conditions hold
	\begin{align}
 (ab)c&=a(bc) \notag\\
	\theta_a(b)  \theta_{a b}(c)&=\theta_a(b c)\label{eq:P1}\tag{P1}\\\
\theta_{\theta_a(b)}\theta_{a  b}&=\theta_b\label{eq:P2}\tag{P2}
	\end{align}
for all $a,b,c \in S$. Thus, the first identity naturally gives rise to a semigroup structure on $S$, which leads the study of solutions to focus on specific classes of semigroups. When describing solutions, it serves to distinguish those solutions that are not isomorphic. 
\begin{defin}
    Let $S, T$ be two semigroups and $s(a,b)=(ab, \theta_a(b))$, $t(u, v)=(uv, \eta_u(v))$ two solutions on $S$ and $T$, respectively. Then, $s$ and $t$ are \emph{isomorphic} if there exists an isomorphism $\psi: S \to T$ such that
    \begin{align}\label{isomor}
        \psi\theta_a(b)=\eta_{f\left(a\right)}\psi(b),
    \end{align}
    for all $a,b \in S$, or, equivalently, $(\psi \times \psi)s=t( \psi\times \psi)$.    \end{defin}

\medskip

The following are easy examples of solutions used throughout this paper.
\begin{exs} \hspace{1mm} \label{exs_solutions}
    \begin{enumerate}
        \item  Let $S$ be a set and $f, g:S\to S$ idempotent maps such that $fg=gf$. Then, $s(a,b) = \left(f\left(a\right),\, g\left(b\right)\right)$ is a solution on $S$ (cf. \cite{Mi98}).
 \vspace{1mm}
    \item Let $S$ be a semigroup and $\gamma\in \End(S)$ such that $\gamma^2=\gamma$. Then, the map $s$ given by
	$s(a,b)=\left(ab,\gamma\left(b\right)\right),$
	for all $a,b \in S$, is a solution on $S$ (see \cite[Examples 2-2.]{CaMaMi19}). 
\end{enumerate}
\end{exs}

\medskip
\noindent Let us observe that every Clifford semigroup $S$ gives rise to the following solutions 
\begin{align}\label{solu_Clifford}
\mathcal{I}(a,b)=(ab, b), \qquad \mathcal{F}(a,b)=\left(ab, bb^{-1}\right), \qquad \mathcal{E}(a,b)=(ab, e), 
\end{align}
where $e \in \E(S)$ is a fixed idempotent of $S$, belonging to the class of solutions in $2.$ of \cref{exs_solutions}. 
\medskip

In \cite{BaSk93}, solutions of \eqref{vec_equa} are defined on Hilbert spaces in terms of commutative and cocommutative multiplicative unitary operators (see \cite[Definition 2.1]{BaSk93}). These operators motivate the following classes of solutions in the set-theoretical case.

\begin{defin}
	A solution $s: S \times S \to S \times S$ is said to be \emph{commutative} if $s_{12}s_{13}=s_{13}s_{12}$ and  \emph{cocommutative} if $s_{13}s_{23}=s_{23}s_{13}$.
\end{defin}

\noindent Solutions in \cref{exs_solutions}-$1.$ are both commutative and cocommutative. In \cite[Corollary 3.4]{CoJeKu20}, it is proved that if $s$ is an involutive solution, i.e., $s^2=\id_{S \times S}$, then $s$ is both commutative and cocommutative.

\medskip

\noindent\textbf{Convention}: In the sequel, we assume that 
$S$ is a Clifford semigroup and simply write that $s$ is a solution on $S$ instead of $s(a,b) = (ab, \theta_a(b))$, for all $a,b\in S$.
\bigskip

\section{Properties of solutions on Clifford semigroups}
In this section, we show the existence of a normal subsemigroup associated to any solution $s$ on $S$.
We point out that the properties we proved are consistent with those given in the context of groups \cite{CaMaMi19}. 
\medskip
\begin{prop}\label{prop_properties_theta}
Let $s$ be a solution on $S$. Then, the following statements hold:
\begin{enumerate}
    \item $\theta_a\left(a^{-1}\right)=\theta_{aa^{-1}}\left(a\right)^{-1}$,
    \item $\theta_a\left(a^{-1}a\right)=\theta_a\left(a^{-1}\right)\theta_a\left(a^{-1}\right)^{-1}\in \E(S)$,
    \item $\theta_{aa^{-1}}=\theta_{\theta_{a^{-1}}\left(aa^{-1}\right)}\theta_{a^{-1}}$,
\end{enumerate}
    for every $a \in S$.
    \begin{proof}
    Let $a \in S$. Then, by \eqref{eq:P1}, we have
    \begin{align*}
        \theta_a\left(a^{-1}\right)\theta_{aa^{-1}}\left(a\right)\theta_a\left(a^{-1}\right)&=\theta_a\left(a^{-1}a\right)\theta_a\left(a^{-1}\right)=\theta_a\left(a^{-1}a\right)\theta_{aa^{-1}a}\left(a^{-1}\right)\\
        &=\theta_a\left(a^{-1}aa^{-1}\right)=\theta_a\left(a^{-1}\right)
    \end{align*}
    and $\theta_{aa^{-1}}\left(a\right)\theta_a\left(a^{-1}\right)\theta_{aa^{-1}}\left(a\right)=\theta_{aa^{-1}}\left(aa^{-1}\right)\theta_{aa^{-1}}\left(a\right)=\theta_{aa^{-1}}\left(aa^{-1}a\right)=\theta_{aa^{-1}}\left(a\right),
    $
    hence $\theta_a\left(a^{-1}\right)=\theta_{aa^{-1}}\left(a\right)^{-1}$, so $1.$ is satisfied.\\
    Moreover, by $1.$, we get $\theta_a\left(a^{-1}a\right)=\theta_a\left(a^{-1}\right)\theta_{aa^{-1}}\left(a\right)=\theta_a\left(a^{-1}\right)\theta_a\left(a^{-1}\right)^{-1}$,
    thus $\theta_a\left(a^{-1}a\right)$ is an idempotent of $S$.\\
    Finally, by \eqref{eq:P2}, $\theta_{aa^{-1}}=\theta_{\theta_{a^{-1}}\left(aa^{-1}\right)}\theta_{a^{-1}aa^{-1}}=\theta_{\theta_{a^{-1}}\left(aa^{-1}\right)}\theta_{a^{-1}}$,
    which is our claim.
    \end{proof}
\end{prop}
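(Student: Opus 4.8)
The plan is to treat the three statements in order, deriving the second and third as short consequences of the work done for the first. The core observation is that $S$ is an inverse semigroup, so to prove an identity of the form $y = x^{-1}$ it suffices, by uniqueness of inverses, to verify the two defining relations $xyx = x$ and $yxy = y$. I would therefore set $x := \theta_a(a^{-1})$ and $y := \theta_{aa^{-1}}(a)$ and aim to establish these two equalities; once they hold, uniqueness of the inverse in $S$ forces $y = x^{-1}$, which is precisely statement~1.

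The only machinery needed is \eqref{eq:P1}, read as $\theta_c(b)\theta_{cb}(d) = \theta_c(bd)$, applied repeatedly with carefully chosen arguments, together with the inverse-semigroup identity $aa^{-1}a = a$ (and its companion $a^{-1}aa^{-1} = a^{-1}$). First I would compute $xy$ by applying \eqref{eq:P1} with $c = a$, $b = a^{-1}$, $d = a$, which yields $\theta_a(a^{-1})\theta_{aa^{-1}}(a) = \theta_a(a^{-1}a)$. Multiplying on the right by $x = \theta_a(a^{-1})$ and rewriting $\theta_a(a^{-1}) = \theta_{aa^{-1}a}(a^{-1})$ (legitimate since $aa^{-1}a = a$), a second application of \eqref{eq:P1} collapses the product to $\theta_a(a^{-1}aa^{-1}) = \theta_a(a^{-1}) = x$, giving $xyx = x$. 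The verification of $yxy = y$ is symmetric: applying \eqref{eq:P1} with $c = aa^{-1}$ produces $\theta_{aa^{-1}}(a)\theta_a(a^{-1}) = \theta_{aa^{-1}}(aa^{-1})$, and one more application (rewriting $\theta_{aa^{-1}}(a) = \theta_{(aa^{-1})(aa^{-1})}(a)$) reduces $yxy$ to $\theta_{aa^{-1}}(a) = y$.

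Statement~2 then comes essentially for free: the intermediate equality $\theta_a(a^{-1}a) = \theta_a(a^{-1})\theta_{aa^{-1}}(a)$ established above, combined with statement~1, rewrites the right-hand side as $\theta_a(a^{-1})\theta_a(a^{-1})^{-1}$, which is of the form $xx^{-1}$ and hence an idempotent of $S$. Statement~3 is the most direct of all: it is a single instance of \eqref{eq:P2}, read as $\theta_{\theta_c(b)}\theta_{cb} = \theta_b$, taken with $c = a^{-1}$ and $b = aa^{-1}$, since $a^{-1}\cdot aa^{-1} = a^{-1}$ makes the middle factor $\theta_{a^{-1}aa^{-1}}$ equal to $\theta_{a^{-1}}$ and the right-hand side equal to $\theta_{aa^{-1}}$.

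The main obstacle is bookkeeping rather than conceptual: each application of \eqref{eq:P1} requires recognizing which factor should be re-expressed using $aa^{-1}a = a$ so that the subscript $cb$ matches the standing element. The genuinely guiding idea — and the step I would flag as the crux — is the decision to prove statement~1 through the inverse-characterization $xyx = x$, $yxy = y$ rather than by a frontal computation of $x^{-1}$, since it is this reformulation that makes \eqref{eq:P1} applicable in a clean, symmetric way.
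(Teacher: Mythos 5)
Your proposal is correct and matches the paper's proof essentially step for step: statement 1 is obtained by verifying $xyx=x$ and $yxy=y$ for $x=\theta_a\left(a^{-1}\right)$ and $y=\theta_{aa^{-1}}\left(a\right)$ via the same applications of \eqref{eq:P1} (including the rewritings $\theta_a\left(a^{-1}\right)=\theta_{aa^{-1}a}\left(a^{-1}\right)$ and $\theta_{aa^{-1}}\left(a\right)=\theta_{\left(aa^{-1}\right)\left(aa^{-1}\right)}\left(a\right)$) together with uniqueness of inverses in an inverse semigroup, statement 2 follows from the intermediate identity $\theta_a\left(a^{-1}a\right)=\theta_a\left(a^{-1}\right)\theta_{aa^{-1}}\left(a\right)$ combined with statement 1, and statement 3 is the same single instance of \eqref{eq:P2}. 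Nothing needs to be changed.
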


\noindent Note that the previous result also holds in any inverse semigroup that is not necessarily Clifford.
\medskip

Now, let us introduce a crucial object in studying solutions on Clifford semigroups. 
\begin{defin}
If $s$ is a solution on $S$, the following set
\begin{align*}
    K =\{a\in S\, \mid \, \forall \ e \in \E(S), \,\, e \leq a\quad \theta_e(a) \in \E(S)\}
\end{align*}
is called the \emph{kernel} of $s$.
\end{defin}

\noindent Consistently with \cite[Lemma 13]{CaMaMi19}, our aim is to show that $K$ is a \emph{normal subsemigroup} of the Clifford $S$, namely, $\E(S)\subseteq K$ and $a^{-1}Ka \subseteq K$, for every $a \in S$. To this end, we first provide a preliminary result.

\begin{lemma}\label{lemma_vicent}
Let $s$ be a solution on $S$ and $K$ the kernel of $s$. Then, they hold:
\begin{enumerate}
    \item $\theta_a(e) \in \E(S)$, for all $a\in S$ and $e\in \E(S)$ such that  
    $a\leq e$;
    \item $\theta_{ea}(k) \in \E(S)$, for all $a \in S$, $k \in K$, and $e \in \E(S)$ such that $e \leq a$, $e \leq k$.
\end{enumerate}
\begin{proof}
Let $a \in S$ and $e \in \E(S)$. If $a \leq e$, by \eqref{eq:P1}, we obtain $\theta_a(e) = \theta_a(e)\theta_{ae}(e) = \theta_a(e)^2$, hence $1.$ follows.
Now, if $k \in K$ and $e \leq a$, $e \leq k$, then $\theta_e(k) \in \E(S)$ and, by \eqref{eq:P2}, 
\begin{align*}
  \theta_{ea}(k)=\theta_{\theta_{a^{-1}}\left(ea\right)}\theta_{a^{-1}ea}(k)=\theta_{\theta_{a^{-1}}\left(ea\right)}\theta_{e}(k).
\end{align*}
If we prove that $\theta_{a^{-1}}\left(ea\right) \leq \theta_{e}(k)$, by  $1.$, we obtain that $\theta_{ea}(k) \in \E(S)$. We get
\begin{align*}
\theta_{a^{-1}}\left(ea\right)   
&= \theta_{a^{-1}}\left(eakk^{-1}\right)
= \theta_{a^{-1}}\left(ea\right)
\theta_{a^{-1}ea}\left(kk^{-1}\right)
=\theta_{a^{-1}}\left(ea\right)
\theta_{e}\left(kk^{-1}\right)\\
&=\theta_{a^{-1}}\left(ea\right)
\theta_{e}\left(k\right)\theta_{ek}\left(k^{-1}\right).
\end{align*}
Hence, by \eqref{leq-prod}, $\theta_{a^{-1}}\left(ea\right)\leq  \theta_{e}\left(k\right)$. Therefore, the claim follows.
\end{proof}
\end{lemma}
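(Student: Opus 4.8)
The plan is to handle the two items separately, using the first as the engine for the second.

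For item 1, I would simply specialize the mixed associativity relation \eqref{eq:P1} to $b=c=e$, which reads $\theta_a(e)\,\theta_{ae}(e)=\theta_a(ee)=\theta_a(e)$. The hypothesis $a\le e$ with $e\in\E(S)$ lets me invoke the identity $ae=ea=a$ recorded just after \cref{theo_Clifford}, so that $\theta_{ae}=\theta_a$ and the displayed equation collapses to $\theta_a(e)^2=\theta_a(e)$. Thus $\theta_a(e)$ is idempotent, which is exactly the claim.

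For item 2, the strategy is to rewrite the subscript $ea$ by means of \eqref{eq:P2} so that a factor $\theta_e$ is exposed, and then feed the outcome into item 1. Applying \eqref{eq:P2} with $a^{-1}$ in place of $a$ and $ea$ in place of $b$ gives $\theta_{ea}=\theta_{\theta_{a^{-1}}(ea)}\,\theta_{a^{-1}ea}$. Here I would exploit centrality of the idempotent $e$ together with $e\le a$ (equivalently $e\le a^{-1}a$, since $aa^{-1}=a^{-1}a$) to simplify $a^{-1}ea=(a^{-1}a)e=e$, so that $\theta_{ea}(k)=\theta_{\theta_{a^{-1}}(ea)}\bigl(\theta_e(k)\bigr)$. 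Because $k\in K$ and $e\le k$, the definition of the kernel yields $\theta_e(k)\in\E(S)$. I am therefore reduced to showing that $\theta_{\theta_{a^{-1}}(ea)}$ carries the idempotent $\theta_e(k)$ into $\E(S)$, which by item 1 will follow as soon as I establish the order relation $\theta_{a^{-1}}(ea)\le\theta_e(k)$.

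Establishing this last inequality is the main obstacle. My plan is to produce $\theta_e(k)$ as an explicit factor of $\theta_{a^{-1}}(ea)$ and then invoke \eqref{leq-prod}. Since $ea$ lies in $G_e$ and $e\le kk^{-1}$, multiplication by the idempotent $kk^{-1}$ fixes $ea$, so $\theta_{a^{-1}}(ea)=\theta_{a^{-1}}(ea\,kk^{-1})$. Expanding the right-hand side twice with \eqref{eq:P1} — first splitting $ea$ from $kk^{-1}$ and using again $a^{-1}ea=e$, then splitting $k$ from $k^{-1}$ — should give $\theta_{a^{-1}}(ea)=\theta_{a^{-1}}(ea)\,\theta_e(k)\,\theta_{ek}(k^{-1})$. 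As $\theta_e(k)$ is a central idempotent, I can gather it as a one-sided factor, and \eqref{leq-prod} then delivers $\theta_{a^{-1}}(ea)\le\theta_e(k)$, closing the argument. The care needed here is purely in the Clifford bookkeeping — checking that the relevant idempotents dominate the elements they multiply, so that those factors act as identities — rather than in any deep structural input.
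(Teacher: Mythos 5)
Your proposal is correct and follows essentially the same route as the paper's proof: item 1 via \eqref{eq:P1} with $b=c=e$ and the absorption $ae=a$; item 2 via the \eqref{eq:P2} decomposition $\theta_{ea}=\theta_{\theta_{a^{-1}}\left(ea\right)}\theta_{e}$ (using $a^{-1}ea=e$), the reduction to $\theta_{a^{-1}}\left(ea\right)\leq\theta_e(k)$, and the factorization $\theta_{a^{-1}}\left(ea\right)=\theta_{a^{-1}}\left(ea\right)\theta_e(k)\theta_{ek}\left(k^{-1}\right)$ obtained by inserting $kk^{-1}$ and applying \eqref{eq:P1} twice, concluded with \eqref{leq-prod}. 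Your explicit observation that centrality of the idempotent $\theta_e(k)$ is what allows gathering it as a one-sided factor before invoking \eqref{leq-prod} is a detail the paper leaves implicit.
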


\medskip

\begin{cor}
    Let $s$ be a solution on $S$. If $a,b\in S$ are such that $a\leq b$, then $\theta_a(b)\in G_{\theta_a\left(bb^{-1}\right)}$. Moreover, they hold $\theta_a(bb^{-1}) = \theta_a(b)\theta_a(b)^{-1}$ and $\theta_a(b)^{-1} = \theta_{ab}(b^{-1})$. 
    \begin{proof}
       If $a,b\in S$ are such that $a\leq b$, then  $a \leq bb^{-1}$ and by \cref{lemma_vicent}-$1.$, $\theta_a(bb^{-1})\in \E(S)$. Now,
$$
\theta_a(b) = \theta_a\left(bb^{-1}b\right) = \theta_a\left(bb^{-1}\right)\theta_{abb^{-1}}(b) = \theta_a\left(bb^{-1}\right)\theta_a(b)
$$
and $\theta_{a}\left(bb^{-1}\right) = \theta_a(b)\theta_{ab}\left(b^{-1}\right)$. Thus, by \eqref{leq-prod}, $\theta_a(b) \leq \theta_a(bb^{-1})$ and $\theta_a(bb^{-1})\leq \theta_a(b)$, i.e. $\theta_a(b)\in G_{\theta_a\left(bb^{-1}\right)}$.  In addition, by the equality $\theta_a\left(bb
^{-1}\right)=\theta_a\left(b^{-1}b
\right)=\theta_a\left(b^{-1}\right)\theta_{ab^{-1}}\left(b\right)$
and the previous paragraph, it follows that $\theta_a(b)$, $\theta_a(b^{-1})$, and $\theta_a(bb^{-1})$ are in the same group with identity $\theta_a(bb^{-1})$. Moreover, $\theta_a(b)^{-1} = \theta_{ab}\left(b^{-1}\right)$, which completes the proof.
    \end{proof}
\end{cor}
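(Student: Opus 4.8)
The plan is to fix $a,b\in S$ with $a\leq b$ and to set $e:=\theta_a(bb^{-1})$. Since $(bb^{-1})(bb^{-1})^{-1}=bb^{-1}$, the condition $a\leq b$ is equivalent to $a\leq bb^{-1}$, so \cref{lemma_vicent}-$1.$ gives $e\in\E(S)$, and moreover $a\,bb^{-1}=a$ because $a\leq bb^{-1}$. With this in hand I would first prove the membership $\theta_a(b)\in G_e$ by extracting two one-sided relations from \eqref{eq:P1}: expanding $b=bb^{-1}b$ yields $\theta_a(b)=\theta_a(bb^{-1})\theta_{a\,bb^{-1}}(b)=e\,\theta_a(b)$, while expanding $bb^{-1}=b\,b^{-1}$ yields $e=\theta_a(b)\theta_{ab}(b^{-1})$. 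By \eqref{leq-prod} the first relation gives $\theta_a(b)\leq e$ and the second gives $e\leq\theta_a(b)$, so $\theta_a(b)\,\mathcal{R}\,e$; as $e$ is idempotent, its $\mathcal{R}$-class is $G_e$, whence $\theta_a(b)\in G_{\theta_a(bb^{-1})}$.

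The second assertion is then immediate: inside $G_e$ the identity element is $e$, so the semigroup inverse $\theta_a(b)^{-1}$ (which coincides with the group inverse and again lies in $G_e$) satisfies $\theta_a(b)\theta_a(b)^{-1}=e=\theta_a(bb^{-1})$.

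For the third assertion I would show that $\theta_{ab}(b^{-1})$ is exactly the inverse of $\theta_a(b)$ in $G_e$. The preliminary computations use that $bb^{-1}=b^{-1}b$ in a Clifford semigroup, so that $ab\leq b^{-1}$ (because $(ab)(ab)^{-1}=aa^{-1}bb^{-1}=aa^{-1}\leq bb^{-1}$); applying the already proved first assertion to the pair $(ab,b^{-1})$ places $\theta_{ab}(b^{-1})$ in the group $G_{\theta_{ab}(b^{-1}b)}=G_{\theta_{ab}(bb^{-1})}$. The main obstacle is to identify this group with $G_e$, i.e. to prove $\theta_{ab}(bb^{-1})=e$. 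I would resolve it with a symmetry trick: from \eqref{eq:P1} (using $abb^{-1}=a$) one gets $\theta_{ab}(bb^{-1})=\theta_{ab}(b^{-1})\theta_a(b)$, whereas we already have $e=\theta_a(b)\theta_{ab}(b^{-1})$. Thus, writing $x=\theta_a(b)$ and $y=\theta_{ab}(b^{-1})$, the two idempotents $e=xy$ and $\theta_{ab}(bb^{-1})=yx$ are the products of the same pair in the two orders; in a Clifford semigroup $xy$ and $yx$ lie in the common group $G_{xx^{-1}yy^{-1}}$, which has a unique idempotent, forcing $\theta_{ab}(bb^{-1})=e$. Consequently $\theta_{ab}(b^{-1})\in G_e$, and cancelling $\theta_a(b)$ in the group $G_e$ from $\theta_a(b)\theta_{ab}(b^{-1})=e=\theta_a(b)\theta_a(b)^{-1}$ yields $\theta_{ab}(b^{-1})=\theta_a(b)^{-1}$, completing the proof.

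I expect the decisive step to be the identification $\theta_{ab}(bb^{-1})=e$: both the placement of $\theta_{ab}(b^{-1})$ in a controlled group and the final group-cancellation hinge on it, and it is precisely here that one must exploit the Clifford (central-idempotent) structure through the $xy$/$yx$ argument rather than any purely group-theoretic reasoning.
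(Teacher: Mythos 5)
Your proof is correct, and its first half is identical to the paper's: both establish $\theta_a(b)\in G_{\theta_a\left(bb^{-1}\right)}$ from the two factorizations $\theta_a(b)=\theta_a\left(bb^{-1}\right)\theta_a(b)$ and $\theta_a\left(bb^{-1}\right)=\theta_a(b)\theta_{ab}\left(b^{-1}\right)$ (using $abb^{-1}=a$) together with \eqref{leq-prod}. Where you genuinely diverge is the inverse formula $\theta_a(b)^{-1}=\theta_{ab}\left(b^{-1}\right)$. The paper runs the symmetric computation $\theta_a\left(bb^{-1}\right)=\theta_a\left(b^{-1}\right)\theta_{ab^{-1}}(b)$ to place $\theta_a\left(b^{-1}\right)$ in the same group, and then states the inverse formula with no further detail; the implicit justification is uniqueness of inverses in an inverse semigroup, via $\theta_a(b)\theta_{ab}\left(b^{-1}\right)\theta_a(b)=\theta_a\left(bb^{-1}\right)\theta_a(b)=\theta_a(b)$ and $\theta_{ab}\left(b^{-1}\right)\theta_a(b)\theta_{ab}\left(b^{-1}\right)=\theta_{ab}\left(b^{-1}\right)\theta_a\left(bb^{-1}\right)=\theta_{ab}\left(b^{-1}bb^{-1}\right)=\theta_{ab}\left(b^{-1}\right)$, the last step being \eqref{eq:P1} with $abb^{-1}=a$. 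You instead locate $\theta_{ab}\left(b^{-1}\right)$ itself: the observation $ab\leq b^{-1}$ lets you apply the first assertion to the pair $\left(ab,b^{-1}\right)$, and your $xy$/$yx$ argument is sound --- with $x=\theta_a(b)$, $y=\theta_{ab}\left(b^{-1}\right)$, both $xy=e$ and $yx=\theta_{ab}\left(bb^{-1}\right)$ are idempotents lying in the single group $G_{xx^{-1}yy^{-1}}$ (since idempotents are central, $(xy)(xy)^{-1}=(yx)(yx)^{-1}=xx^{-1}yy^{-1}$), whose unique idempotent forces $\theta_{ab}\left(bb^{-1}\right)=e$; group cancellation in $G_e$ then finishes. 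Each route buys something: yours is fully self-contained and makes explicit precisely the step the paper leaves to the reader, at the cost of the extra verifications ($ab\leq b^{-1}$ and the $xy$/$yx$ lemma), while the paper's symmetric computation additionally yields that $\theta_a\left(b^{-1}\right)$ lies in $G_{\theta_a\left(bb^{-1}\right)}$, a side fact it records in the proof. Your derivation of $\theta_a\left(bb^{-1}\right)=\theta_a(b)\theta_a(b)^{-1}$ directly from membership in $G_e=\{x\in S\mid xx^{-1}=e\}$ is also slightly cleaner than the paper's, which obtains it only as a byproduct of the inverse formula.
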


\medskip

\begin{theor}
    Let $s$ be a solution on $S$. Then, the kernel $K$ of $s$ is a normal subsemigroup of $S$.
    \begin{proof}
Initially, by \cref{lemma_vicent}-$1.$, $\E(S) \subseteq K$. Now, if $k,h \in K$ and $e \in \E(S)$ are such that $e \leq kh$, then $e \leq k$ and $e \leq h$ and thus, $\theta_e(k), \theta_e(h) \in \E(S)$.   By \cref{lemma_vicent}-$2.$, we obtain that $\theta_{ek}\left(h\right) \in \E(S)$, and so that $\theta_e\left(kh\right) = \theta_e\left(k\right)\theta_{ek}\left(h\right) \in \E(S)$. \\
Now, if $a \in S$, $k \in K$, and $e \in \E(S)$ are such that $e \leq a^{-1}ka$, then $e \leq a$, $e \leq a^{-1}$, and $e \leq k$. Then, $\theta_e(k) \in \E(S)$. Besides,
\begin{align*}
    \theta_e\left(a^{-1}ka\right)
    =
    \theta_e\left(a^{-1}\right)
    \theta_{ea^{-1}}(k)
    \theta_{ea^{-1}k}(a).
\end{align*}
By \cref{lemma_vicent}-$1.$, $\theta_e\left(a^{-1}\right) \in \E(S)$ and, by \cref{lemma_vicent}-$2.$, $\theta_{ea^{-1}}(k)\in \E(S)$.
Furthermore, also $\theta_{ea^{-1}k}(a)\in \E(S)$. In fact, by \eqref{eq:P2},
\begin{align*}
    \theta_{ea^{-1}k}\left(a\right)
    = \theta_{\theta_{k^{-1}a}\left(ea^{-1}k\right)}\theta_{k^{-1}aea^{-1}k}\left(a\right)
  = \theta_{\theta_{k^{-1}a}\left(ea^{-1}k\right)}\theta_{e}\left(a\right)
\end{align*}
and, since
\begin{align*}
    \theta_{k^{-1}a}\left(ea^{-1}k\right)
    &= \theta_{k^{-1}a}\left(ea^{-1}kaa^{-1}\right)
     \theta_{k^{-1}a}\left(ea^{-1}k\right)
    \theta_{k^{-1}aea^{-1}k}\left(aa^{-1}\right)\\
    &= \theta_{k^{-1}a}\left(ea^{-1}k\right)
    \theta_{e}\left(a\right)\theta_{ea}\left(a^{-1}\right),
\end{align*}
 we obtain that, by \eqref{leq-prod},
$\theta_{k^{-1}a}\left(ea^{-1}k\right)\leq \theta_{e}\left(a\right)$. So, as before, by \cref{lemma_vicent}-$1.$, we obtain
$\theta_{ea^{-1}k}\left(a\right)\in E\left(S\right)$.
Therefore, the claim follows.
    \end{proof}
\end{theor}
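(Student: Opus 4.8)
The plan is to verify the two defining conditions of a normal subsemigroup directly: that $\E(S) \subseteq K$ (which subsumes the requirement that $K$ is closed under multiplication only if we also check $kh \in K$), that $K$ is closed under the binary operation, and that $a^{-1}Ka \subseteq K$ for every $a \in S$. The containment $\E(S) \subseteq K$ should be immediate from \cref{lemma_vicent}-$1.$: if $f \in \E(S)$ and $e \in \E(S)$ with $e \leq f$, then $f$ itself plays the role of the idempotent-valued element, and $\theta_e(f) \in \E(S)$ follows since $f$ is idempotent and $e \leq f$. So the real content lies in the closure and conjugation properties.

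For closure, I would take $k, h \in K$ and an arbitrary idempotent $e$ with $e \leq kh$, and aim to show $\theta_e(kh) \in \E(S)$. The first observation is that $e \leq kh$ forces $e \leq k$ and $e \leq h$ individually (this needs the order structure from \cref{theo_Clifford}: since $(kh)(kh)^{-1} = kk^{-1}hh^{-1}$ with central idempotents, $e \leq kh$ means $e$ sits below both idempotents $kk^{-1}$ and $hh^{-1}$). Consequently $\theta_e(k), \theta_e(h) \in \E(S)$ by the definition of $K$. Then I would expand via \eqref{eq:P1}: $\theta_e(kh) = \theta_e(k)\,\theta_{ek}(h)$. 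The factor $\theta_e(k)$ is already idempotent, so it remains to show $\theta_{ek}(h) \in \E(S)$, and this is exactly the payoff of \cref{lemma_vicent}-$2.$ applied with $a = k$ and the idempotent $e$ (using $e \leq k$ and $e \leq h$). Since a product of two commuting idempotents in a Clifford semigroup is again idempotent, $\theta_e(kh)$ is idempotent and $kh \in K$.

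For the conjugation property, I would fix $a \in S$, $k \in K$, and an idempotent $e$ with $e \leq a^{-1}ka$, and show $\theta_e(a^{-1}ka) \in \E(S)$. Again the order relation gives $e \leq a$, $e \leq a^{-1}$, and $e \leq k$, so $\theta_e(k) \in \E(S)$. Repeated use of \eqref{eq:P1} factors the expression as $\theta_e(a^{-1}ka) = \theta_e(a^{-1})\,\theta_{ea^{-1}}(k)\,\theta_{ea^{-1}k}(a)$. The first factor is idempotent by \cref{lemma_vicent}-$1.$ (since $e \leq a^{-1}$), and the middle factor is idempotent by \cref{lemma_vicent}-$2.$ (with the semigroup element $a^{-1}$ in place of $a$, using $e \leq a^{-1}$ and $e \leq k$).

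The hard part will be the third factor, $\theta_{ea^{-1}k}(a)$, which is not covered by either part of \cref{lemma_vicent} directly. My plan here is to mimic the technique inside \cref{lemma_vicent} itself: apply \eqref{eq:P2} to rewrite $\theta_{ea^{-1}k}(a) = \theta_{\theta_{k^{-1}a}(ea^{-1}k)}\,\theta_{k^{-1}aea^{-1}k}(a)$, and simplify the inner subscript $k^{-1}aea^{-1}k$ down to $e$ using centrality of idempotents and the fact that $e$ lies below the relevant idempotents. This reduces the factor to $\theta_{\theta_{k^{-1}a}(ea^{-1}k)}\,\theta_e(a)$. The remaining obstacle is to show that the subscript element $\theta_{k^{-1}a}(ea^{-1}k)$ lies below $\theta_e(a)$, so that \cref{lemma_vicent}-$1.$ applies to conclude idempotency. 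I would establish this inequality by the same absorption trick used earlier: insert $aa^{-1}$ and expand via \eqref{eq:P1} to exhibit $\theta_{k^{-1}a}(ea^{-1}k)$ as a product whose factors include $\theta_e(a)$, then invoke \eqref{leq-prod}. Once all three factors are shown idempotent, their product is idempotent (commuting idempotents in a Clifford semigroup), giving $a^{-1}ka \in K$ and completing the proof.
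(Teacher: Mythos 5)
Your proposal is correct and follows essentially the same route as the paper's own proof: the same reduction of $e\leq kh$ and $e\leq a^{-1}ka$ to componentwise inequalities, the same factorizations via \eqref{eq:P1} with \cref{lemma_vicent} handling the first two factors, and for the critical third factor $\theta_{ea^{-1}k}(a)$ the identical \eqref{eq:P2} rewrite followed by the $aa^{-1}$-absorption trick, \eqref{leq-prod}, and \cref{lemma_vicent}-$1$. There is nothing to fix.
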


\medskip

We conclude the section by describing the commutative and cocommutative solutions on Clifford semigroups. It is easy to check that a solution $s(a,b)=(ab, \theta_a(b))$ is commutative if, and only if,
\begin{align}
    acb = abc\label{C1}\tag{C1}\\
   \theta_a = \theta_{ab}\label{C2}\tag{C2}
\end{align}
and $s$ is cocommutative if, and only if, 
\begin{align}
  \label{CC1}  &a\theta_b(c)=ac\tag{CC1}\\
  \label{CC2}  &\theta_a\theta_{b}=\theta_b\theta_a\tag{CC2}
\end{align} 
for all $a,b,c \in S$.
\begin{prop}
    Let $s$ be a solution on $S$. Then, 
    \begin{enumerate}
        \item $s$ is commutative if, and only if, $S$ is a commutative Clifford semigroup and $\theta_a=\gamma$, for every $a \in S$, with $\gamma \in \End(S)$ and $\gamma^2=\gamma$.
        \item $s$ is cocommutative if, and only if, $\theta_a(b) = b$, for all $a,b \in S$, i.e., $s=\mathcal{I}$.
    \end{enumerate} 
    \begin{proof}
    At first, we suppose that $s(a,b)=(ab, \theta_a(b))$ is a commutative solution. Then, by \eqref{C1}, taking $a=cc^{-1}$, we obtain that $S$ is commutative. Moreover, by \eqref{C2}, we get $\theta_a=\theta_{ab}=\theta_{ba}=\theta_b$. Hence, $\theta_a=\gamma$, for every $a \in S$, and by the definition of solution we obtain the rest of the claim. The converse trivially follows by $2.$ in \cref{exs_solutions}.\\
    Now, assume that $s(a,b)=(ab, \theta_a(b))$ is a cocommutative solution. Then, by \eqref{CC1}, taking $a=cc^{-1}$, we obtain
    \[ cc^{-1}\theta_b(c) = c, \quad \text{for all $b,c \in S$.}\]
    Set $e_0 := \theta_b(c)\theta_b(c)^{-1}$, it follows that $cc^{-1}\leq e_0$.
    On the other hand, again by \eqref{CC1}, $e\theta_b(c) = ec$, for every  $e \in \E(S)$. In particular, $\theta_b(c) = e_0 \theta_b(c) = e_0c$. Thus, $e_0 \leq cc^{-1}$ and so $e_0 = cc^{-1}$. Therefore, we get $\theta_b(c) = c$, that is our claim.
    \end{proof}
\end{prop}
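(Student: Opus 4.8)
The plan is to work entirely with the pointwise reformulations already isolated in the excerpt: commutativity of $s$ is equivalent to \eqref{C1}--\eqref{C2}, and cocommutativity to \eqref{CC1}--\eqref{CC2}. The whole argument then consists in substituting carefully chosen idempotents into these identities and reading off the consequences through the Clifford structure, i.e. through centrality of $\E(S)$ and the fact that each $\mathcal{R}$-class $G_e$ is a group with identity $e$.

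For part $1.$, I would first specialise \eqref{C1} at $a=cc^{-1}$. Using $cc^{-1}c=c$ and the centrality of $cc^{-1}$, the left-hand side becomes $cb$ and the right-hand side $bc$, so $S$ is commutative. Feeding this back into \eqref{C2} gives $\theta_a=\theta_{ab}=\theta_{ba}=\theta_b$ for all $a,b$, hence every $\theta_a$ equals a common map $\gamma$. It then remains to recognise $\gamma$: substituting $\theta_a=\gamma$ into \eqref{eq:P1} shows $\gamma(b)\gamma(c)=\gamma(bc)$, so $\gamma\in\End(S)$, and substituting into \eqref{eq:P2} shows $\gamma^2=\gamma$. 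The converse is the routine check that, on a commutative Clifford semigroup, the map $s(a,b)=(ab,\gamma(b))$ with $\gamma$ an idempotent endomorphism — a solution by the second item of \cref{exs_solutions} — satisfies \eqref{C1} and \eqref{C2}.

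For part $2.$, the plan is to squeeze the idempotent $e_0:=\theta_b(c)\theta_b(c)^{-1}$, the identity of the group containing $\theta_b(c)$, between $cc^{-1}$ from both sides. Setting $a=cc^{-1}$ in \eqref{CC1} yields $cc^{-1}\theta_b(c)=c$; since the left-hand product lies in $G_{cc^{-1}e_0}$ while $c\in G_{cc^{-1}}$, this forces $cc^{-1}e_0=cc^{-1}$, i.e. $cc^{-1}\leq e_0$. For the reverse inequality I would instead substitute the idempotent $a=e_0$ into \eqref{CC1}: because $e_0$ is the identity of the group containing $\theta_b(c)$, the left-hand side collapses to $\theta_b(c)$, giving $\theta_b(c)=e_0c$, and comparing $\mathcal{R}$-classes now yields $e_0\leq cc^{-1}$. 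Hence $e_0=cc^{-1}$ and $\theta_b(c)=cc^{-1}c=c$, that is $s=\mathcal{I}$. The converse is immediate, since $\mathcal{I}(a,b)=(ab,b)$ trivially satisfies \eqref{CC1} and \eqref{CC2}.

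The step I expect to be the main obstacle is producing the two opposite idempotent inequalities in part $2.$: the single equation $cc^{-1}\theta_b(c)=c$ only delivers $cc^{-1}\leq e_0$, and the essential idea is to return to \eqref{CC1} with an \emph{idempotent} argument $a=e_0$ rather than a general element, so that the action of $e_0$ as a group identity strips it off and exposes the reverse inequality $e_0\leq cc^{-1}$. Everything else is bookkeeping with the Clifford product formula and the order characterisation \eqref{leq-prod}.
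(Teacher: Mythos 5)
Your proposal is correct and takes essentially the same route as the paper's proof: the same specialisation $a=cc^{-1}$ in \eqref{C1} and \eqref{CC1}, and the same squeeze $cc^{-1}\leq e_0\leq cc^{-1}$, where your substitution of the idempotent $a=e_0$ into \eqref{CC1} is exactly the paper's step of deriving $e\theta_b(c)=ec$ for all $e\in\E(S)$ and specialising to $e=e_0$. The only cosmetic differences are that you compare $\mathcal{R}$-classes where the paper invokes \eqref{leq-prod}, and you spell out via \eqref{eq:P1} and \eqref{eq:P2} the verification that $\gamma\in\End(S)$ with $\gamma^2=\gamma$, which the paper leaves implicit under ``by the definition of solution''.
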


\bigskip

\section{A description of idempotent-invariant solutions}
In this section, we provide a description of a specific class of solutions on a Clifford semigroup, the  idempotent-invariant ones, which includes the result contained in \cite[Theorem 15]{CaMaMi19}.

\medskip

\begin{defin}
A solution $s$  on $S$ is said to be \emph{idempotent-invariant} or \emph{$E\left(S\right)$-invariant} if it holds the identity
\begin{align}\label{eq:id-inv}
    \theta_a(e) = \theta_a(f),
\end{align}
for all $a\in S$ and $e,f\in\E(S)$. 
\end{defin}
\noindent An easy example of $\E(S)$-invariant solution is   $\mathcal{E}(a,b) = (ab, e)$ in \eqref{solu_Clifford}, with $e\in \E(S)$.

\begin{ex}\label{ex_monoide_in}
Let us consider the commutative Clifford monoid $S =\{1,\,a,\,b\}$ with identity $1$ and such that $a^2=a$, $b^2=a$, and $ab=b$. Then, other than the map $\mathcal{E}$ in \eqref{solu_Clifford}, there exists the idempotent-invariant solution $s(a,b)=(ab, \gamma(b))$ with $\gamma: S \to S$ the map given by $\gamma(1)=\gamma(a)=a$ and $\gamma(b)=b$, which belongs to the class of solutions in $2.$ of \cref{exs_solutions}.
\end{ex}
\medskip

Next, we show how to construct an idempotent-invariant solution on $S$ starting from a specific congruence on $S$. 
 Recall that the restriction of a congruence $\rho$ in a Clifford semigroup $S$ to $\E(S)$ is also a congruence on $\E(S)$,  called the \emph{trace} of $\rho$ and usually denoted by $\tau=\tr \rho$ (for more details, see \cite[Section 5.3]{Ho95}).

\begin{prop}\label{propmu}
	Let $S$ be a Clifford semigroup, $\rho$ a congruence on $S$ such that $S/\rho$ is a group, and $\mathcal{R}$ a system of representatives of $S/\rho$.  If $\mu: S \to \mathcal{R}$ is a map such that 
	\begin{equation}\label{cond_solu}
	 \mu\left(ab\right)=\mu\left(a\right)\mu\left(a\right)^{-1}\mu\left(ab\right),
	\end{equation}
	for all $a,b \in S$, and $\mu(a) \in [a]_{\rho}$, for every $a \in S$, then the map $s:S\times S\to S\times S$ given by 
	\begin{equation*}
	s(a,b)=\left(ab, \mu\left(a\right)^{-1}\mu\left(ab \right)\right),
	\end{equation*}
	for all $a,b \in S$, is an $\E(S)$-invariant solution on $S$.
\end{prop}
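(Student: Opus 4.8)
The plan is to verify the three requirements directly: that $s$ satisfies \eqref{eq:P1} and \eqref{eq:P2} (associativity of the first component is automatic, since $S$ is already a semigroup), and that $s$ is $\E(S)$-invariant. Throughout we have $\theta_a(b)=\mu(a)^{-1}\mu(ab)$, and $\mu(a)^{-1}$ is well defined because $S$ is an inverse semigroup. The whole argument rests on two preliminary observations about how $\mu$ interacts with $\rho$.

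The first observation is that, since $S/\rho$ is a \emph{group}, it has a single idempotent; hence every $e\in\E(S)$ satisfies $[e]_\rho=[e]_\rho^2$, forcing $[e]_\rho$ to be the identity of $S/\rho$. Thus all idempotents of $S$ lie in one and the same $\rho$-class, so $\mu(e)=\mu(f)$ for all $e,f\in\E(S)$. This immediately yields $\E(S)$-invariance: for $e,f\in\E(S)$ we have $e\mathrel{\rho}f$, so $ae\mathrel{\rho}af$ since $\rho$ is a congruence, whence $\mu(ae)=\mu(af)$ and therefore $\theta_a(e)=\mu(a)^{-1}\mu(ae)=\mu(a)^{-1}\mu(af)=\theta_a(f)$. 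The second observation, which is the heart of the matter, is that $\theta_a(b)\mathrel{\rho}b$, equivalently $\mu(\theta_a(b))=\mu(b)$. Computing in the group $S/\rho$ and using $\mu(x)\mathrel{\rho}x$ together with $[x^{-1}]_\rho=[x]_\rho^{-1}$, one finds $[\theta_a(b)]_\rho=[\mu(a)]_\rho^{-1}[\mu(ab)]_\rho=[a]_\rho^{-1}[a]_\rho[b]_\rho=[b]_\rho$; the cancellation $[a]_\rho^{-1}[a]_\rho=$ identity is precisely what the group structure of $S/\rho$ buys us. The second technical ingredient is condition \eqref{cond_solu} applied to the pair $(ab,c)$, namely $\mu(ab)\,\mu(ab)^{-1}\,\mu(abc)=\mu(abc)$, which lets the middle factor $\mu(ab)\mu(ab)^{-1}$ be absorbed.

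With these in hand the two pentagon identities telescope. For \eqref{eq:P1}, I compute $\theta_a(b)\theta_{ab}(c)=\mu(a)^{-1}\mu(ab)\mu(ab)^{-1}\mu(abc)=\mu(a)^{-1}\mu(abc)=\theta_a(bc)$, the middle equality being \eqref{cond_solu}. For \eqref{eq:P2}, writing $u=\theta_a(b)$ and $v=\theta_{ab}(c)$, the same computation gives $uv=\mu(a)^{-1}\mu(abc)$, and this element is $\rho$-related to $bc$, so $\mu(uv)=\mu(bc)$; combined with $\mu(u)=\mu(\theta_a(b))=\mu(b)$ from the second observation, this yields $\theta_{\theta_a(b)}\theta_{ab}(c)=\mu(u)^{-1}\mu(uv)=\mu(b)^{-1}\mu(bc)=\theta_b(c)$, as required.

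I do not expect a deep obstacle here: once the two observations are isolated, the remainder is bookkeeping obtained by substituting the definition of $\theta$ and invoking \eqref{cond_solu}. The step most deserving of care is the second observation, $[\theta_a(b)]_\rho=[b]_\rho$ — in particular justifying $[x^{-1}]_\rho=[x]_\rho^{-1}$ and the cancellation $[a]_\rho^{-1}[a]_\rho=$ identity, both of which rely essentially on $S/\rho$ being a group rather than merely a Clifford semigroup. It is worth noting that neither pentagon identity actually requires the centrality of idempotents in this argument; the collapse is driven entirely by \eqref{cond_solu} and the group structure of the quotient.
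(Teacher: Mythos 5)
Your proof is correct and follows essentially the same route as the paper's: the telescoping computation for \eqref{eq:P1} via \eqref{cond_solu}, the key observation that $\theta_a(b)\mathrel{\rho}b$ (so $\mu(\theta_a(b))=\mu(b)$) to collapse \eqref{eq:P2}, and $\E(S)$-invariance from all idempotents sharing a single $\rho$-class. The only cosmetic difference is that you derive $e\mathrel{\rho}f$ for all $e,f\in\E(S)$ directly from the uniqueness of the idempotent in the group $S/\rho$, where the paper instead invokes \cite[Proposition 5.3.1]{Ho95} to get $\tr\rho=\E(S)\times\E(S)$.
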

\begin{proof}
Let $a,b,c \in S$. Set $\theta_a(b):=\mu\left(a\right)^{-1}\mu\left(ab \right)$, by \eqref{cond_solu}, we obtain
\begin{align*}
\theta_a(b)\theta_{ab}(c)=\mu\left(a\right)^{-1}\mu\left(ab \right)\mu\left(ab\right)^{-1}\mu\left(abc \right)=\mu\left(a\right)^{-1}\mu\left(abc \right)=\theta_a(bc).
\end{align*}
Now, if we compare
	\begin{align*}
	\theta_{\theta_a(b)}\theta_{ab}(c):&= \mu\left(\mu\left(a\right)^{-1} \mu\left(a b\right)\right)^{-1}\mu\left(\mu\left(a\right)^{-1} \mu\left(ab\right)  \mu\left(ab\right)^{-1}  \mu\left(abc\right)\right)\\
	&=\mu\left(\mu\left(a\right)^{-1} \mu\left(ab\right)\right)^{-1} \mu\left(\mu\left(a\right)^{-1}  \mu\left(abc\right)\right)&\mbox{by \eqref{cond_solu}}
	\end{align*}
	and 	
		$\theta_b(c):=\mu(b)^{-1} \mu(bc)$,
to get the claim it is enough to show that
	\begin{align*}
	    \mu(x)^{-1}\mu(xy) = \mu(y),
	\end{align*}
	for all $x, y \in S$. Indeed, by \cite[Proposition 5.3.1]{Ho95}, $\tr \rho = \E(S) \times \E(S)$, and so
	\begin{align*}
	    \mu(x)^{-1}\mu(xy)
	      \ \rho \ 
	    x^{-1}xy  
	    \ \rho \  
	    y^{-1}yy 
	    \ \rho \
	    y
	    \ \rho \ 
	    \mu(y).
	\end{align*}
	Finally, if $a \in S$ and $e, f \in \E(S)$, we obtain that
\begin{align*}
    \mu(ae) \ \rho \ ae \ \rho \ af \rho \ \mu(af), 
\end{align*}
hence $\mu\left(a e\right)=\mu\left(a f\right)$. Thus, $\theta_a(e)=\mu(a)^{-1}\mu\left(a e\right)=\mu(a)^{-1}\mu\left(a f\right)=\theta_a(f)$.
	Therefore, the claim follows.
\end{proof}
\medskip

Our aim is to show that all  idempotent invariant solutions can be constructed exactly as in \cref{propmu}. Firstly, let us collect some useful properties of these maps.

\begin{lemma}\label{lemma_solu_fix_idemp}  
Let $s$ be an $E\left(S\right)$-invariant solution on $S$. Then, 
the following hold:
\begin{enumerate}
    \item $\theta_e=\theta_{f}$,
    \item $\theta_{ae} = \theta_a$, 
    \item $\theta_{a}\left(e\right)\in \E\left(S\right)$, 
    \item $\theta_e\theta_a = \theta_e$,
    \item $\theta_a\left(b\right)
    =  \theta_a\left(eb\right)$,
    \item $\theta_e(a)^{-1}=\theta_{ea}\left(a^{-1}\right)$,
\end{enumerate}
for all  $e,f \in \E\left(S\right)$ and $a, b\in S$.
\begin{proof}
Let $e, f \in \E(S)$ and $a, b\in S$.\\       
$1.$ \ Since $\theta_e = \theta_{\theta_f(e)}\theta_{fe} = \theta_{\theta_f(fe)}\theta_{ffe} = \theta_{fe}$ and, similarly $\theta_f=\theta_{ef}$, it yields that $\theta_f = \theta_e$.\\
$2.$ \ We have that
\begin{align*}
   \theta_{ae}&=\theta_{\theta_{a^{-1}}(ae)}\theta_{aa^{-1}e}\\
    &=\theta_{\theta_{a^{-1}}(a)\theta_{a^{-1}a}(e)}\theta_{aa^{-1}} &\mbox{$aa^{-1}e\in \E\left(S\right)$}\\
    &=\theta_{\theta_{a^{-1}}(a)\theta_{a^{-1}a}\left(a^{-1}a\right)}\theta_{aa^{-1}}&\mbox{by \eqref{eq:id-inv}}\\
    &=\theta_{\theta_{a^{-1}}\left(a\right)}\theta_{aa^{-1}}=\theta_a.
\end{align*}
$3.$ \ According to $2.$, it follows that
$\theta_{a}\left(e\right) 
    = \theta_{a}\left(ee\right)
    = \theta_{a}\left(e\right)\theta_{ae}\left(e\right)
    = \theta_{a}\left(e\right)\theta_{a}\left(e\right)$,
i.e., $\theta_{a}\left(e\right)\in\E\left(S\right)$.\\
$4.$ \ According to $2.$, we obtain that
$\theta_e = \theta_{\theta_{a}\left(e\right)}\theta_{ae}
    = \theta_{e}\theta_{ae}
    = \theta_{e}\theta_{a}$.\\
$5.$ \ Note that, by $2.$,
$\theta_a\left(b\right)
 = \theta_a\left(bb^{-1}b\right)
 = \theta_a\left(bb^{-1}\right)\theta_{abb^{-1}}\left(b\right)
 = \theta_a\left(e\right)\theta_{ae}\left(b\right)
 = \theta_a\left(eb\right)$.\\
 $6.$ \ Applying $1.$, we get $\theta_e\left(a\right)\theta_{ea}\left(a^{-1}\right)\theta_{e}(a)=\theta_e\left(aa^{-1}\right)\theta_{eaa^{-1}}(a)=\theta_e(a)$ and, on the other hand, $$\theta_{ea}\left(a^{-1}\right)\theta_e(a)\theta_{ea}\left(a^{-1}\right)=\theta_{ea}\left(a^{-1}\right)\theta_e\left(aa^{-1}\right)=\theta_{ea}\left(a^{-1}\right)\theta_{eaa^{-1}}\left(aa^{-1}\right)=\theta_e\left(a^{-1}\right).$$
Therefore, the claim follows.
\end{proof}
\end{lemma}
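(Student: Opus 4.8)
The plan is to prove the six identities in the given order, since each relies on its predecessors, using only the two structural relations \eqref{eq:P1} and \eqref{eq:P2}, the invariance hypothesis \eqref{eq:id-inv}, and the Clifford feature that idempotents are central (so any product of idempotents is again a central idempotent). For item~$1.$ I would specialise \eqref{eq:P2} at $(f,e)$ to get $\theta_e=\theta_{\theta_f(e)}\theta_{fe}$; since $e$ and $fe$ are idempotent, \eqref{eq:id-inv} replaces $\theta_f(e)$ by $\theta_f(fe)$ and $fe$ by $ffe$, and a second application of \eqref{eq:P2} at $(f,fe)$ collapses the right-hand side to $\theta_{fe}$. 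Thus $\theta_e=\theta_{fe}$, and by the symmetric computation $\theta_f=\theta_{ef}=\theta_{fe}$, giving $\theta_e=\theta_f$.

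I expect item~$2.$ to be the main obstacle: the point is to strip the idempotent $e$ off the subscript of $\theta_{ae}$. I would apply \eqref{eq:P2} at $(a^{-1},ae)$ to write $\theta_{ae}=\theta_{\theta_{a^{-1}}(ae)}\theta_{a^{-1}ae}$, observe that $a^{-1}ae=aa^{-1}e$ is idempotent so that item~$1.$ turns the inner factor into $\theta_{aa^{-1}}$, and then simplify the outer subscript via \eqref{eq:P1} and \eqref{eq:id-inv}:
\[
\theta_{a^{-1}}(ae)=\theta_{a^{-1}}(a)\,\theta_{a^{-1}a}(e)=\theta_{a^{-1}}(a)\,\theta_{a^{-1}a}(a^{-1}a)=\theta_{a^{-1}}(aa^{-1}a)=\theta_{a^{-1}}(a).
\]
This reduces the expression to $\theta_{\theta_{a^{-1}}(a)}\theta_{aa^{-1}}$, which is precisely $\theta_a$ by \eqref{eq:P2} applied at $(a^{-1},a)$. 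Item~$3.$ is then immediate from item~$2.$ and \eqref{eq:P1}: $\theta_a(e)=\theta_a(ee)=\theta_a(e)\theta_{ae}(e)=\theta_a(e)^2$.

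For item~$4.$ I would read \eqref{eq:P2} at $(a,e)$ as $\theta_e=\theta_{\theta_a(e)}\theta_{ae}$; item~$3.$ makes $\theta_a(e)$ idempotent so item~$1.$ rewrites the outer factor as $\theta_e$, while item~$2.$ rewrites $\theta_{ae}$ as $\theta_a$, yielding $\theta_e=\theta_e\theta_a$. Item~$5.$ follows by expanding $\theta_a(b)=\theta_a(bb^{-1}b)$ with \eqref{eq:P1}, using item~$2.$ to replace $\theta_{abb^{-1}}$ by $\theta_a$ and \eqref{eq:id-inv} to regard $\theta_a(bb^{-1})$ as $\theta_a(e)$, then reassembling $\theta_a(e)\theta_a(b)=\theta_a(e)\theta_{ae}(b)=\theta_a(eb)$ through \eqref{eq:P1}. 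Finally, for item~$6.$ I would check that $\theta_e(a)$ and $\theta_{ea}(a^{-1})$ are mutually inverse: repeatedly using \eqref{eq:P1} together with item~$1.$ to identify $\theta_{eaa^{-1}}$ with $\theta_e$, one finds $\theta_e(a)\theta_{ea}(a^{-1})\theta_e(a)=\theta_e(aa^{-1})\theta_{eaa^{-1}}(a)=\theta_e(a)$ and, symmetrically, $\theta_{ea}(a^{-1})\theta_e(a)\theta_{ea}(a^{-1})=\theta_{ea}(a^{-1})$; uniqueness of inverses in the inverse semigroup $S$ then forces $\theta_e(a)^{-1}=\theta_{ea}(a^{-1})$.
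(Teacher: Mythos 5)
Your proof is correct and follows essentially the same route as the paper's, item by item, with identical applications of \eqref{eq:P1}, \eqref{eq:P2}, \eqref{eq:id-inv}, and the same dependency order (you merely make explicit the collapse $\theta_{a^{-1}}(a)\theta_{a^{-1}a}\left(a^{-1}a\right)=\theta_{a^{-1}}\left(a\right)$ in item~$2.$, which the paper leaves implicit). One small bonus: in item~$6.$ your symmetric computation correctly ends in $\theta_{ea}\left(a^{-1}\right)$, whereas the paper's display contains a typo reading $\theta_e\left(a^{-1}\right)$; the mutual-inverse argument and the conclusion via uniqueness of inverses are the same.
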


\medskip

To prove the converse of \cref{propmu}, we need to recall the notion of the congruence pair of inverse semigroups that are Clifford (see \cite[p. 155]{Ho95}). Given a Clifford semigroup $S$, a congruence $\tau$ on $\E(S)$ is said to be \emph{normal} if  
\begin{align*}
    \forall \ e,f\in \E(S) \quad e \ \tau \ f \ \Longrightarrow \ \forall \ a\in S \quad a^{-1}ea\ \tau \ a^{-1}fa.
\end{align*}
If $K$ is a normal subsemigroup of $S$, the pair $(K,\tau)$ is named a \emph{congruence pair} of $S$ if
\begin{align*}
  \forall\ a \in S,\ e\in \E(S)  \quad ae\in K \ \  \text{and} \ \ (e, a^{-1}a)\in \tau \ \Longrightarrow \ a\in K.
\end{align*}
Given a congruence $\rho$, denoted by $\Ker \rho$ the union of all the idempotent $\rho$-classes, its properties can be described entirely in terms of $\Ker \rho$ and $\tr \rho$.
\begin{theor}[cf. Theorem 5.3.3 in \cite{Ho95}]\label{theo_rho}
Let $S$ be an inverse semigroup. If $\rho$ is a congruence on $S$, then $(\Ker\rho, \tr\rho)$ is a congruence pair. Conversely, if $(K, \tau)$ is a congruence pair, then
\begin{align*}
    \rho_{(K, \tau)}=\lbrace (a,b) \in S \times S \, \mid \, \left(a^{-1}a, b^{-1}b\right) \in \tau, \, ab^{-1} \in K\rbrace
\end{align*}
is a congruence on $S$. Moreover, $\Ker \rho_{(K, \tau)}=K$, $\tr \rho_{(K, \tau)}=\tau$, and $\rho_{(\Ker \rho, \tr \rho)}=\rho$.
\end{theor}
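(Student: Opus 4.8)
For the last statement (\cref{theo_rho}), which is the classical kernel--trace description of congruences, the plan is to prove the two directions separately and then verify that the two assignments $\rho \mapsto (\Ker\rho,\tr\rho)$ and $(K,\tau)\mapsto \rho_{(K,\tau)}$ are mutually inverse.

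For the forward direction I fix a congruence $\rho$, set $K=\Ker\rho$ (the union of the idempotent $\rho$-classes) and $\tau=\tr\rho$. First I record that $K$ is full (each idempotent lies in its own class) and is a self-conjugate subsemigroup: if $a\,\rho\,e$ and $b\,\rho\,f$ with $e,f\in\E(S)$, then $ab\,\rho\,ef\in\E(S)$ and $x^{-1}ax\,\rho\,x^{-1}ex\in\E(S)$, using that conjugates of idempotents are idempotent in any inverse semigroup; moreover $a\,\rho\,e$ gives $a^{-1}\,\rho\,e$, so $K$ is inverse-closed. That $\tau$ is a normal congruence on $\E(S)$ is immediate from compatibility of $\rho$ restricted to $\E(S)$. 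The only substantive point is the congruence-pair axiom: if $ae\in K$ and $(e,a^{-1}a)\in\tau$, then $e\,\rho\,a^{-1}a$ yields $a=a(a^{-1}a)\,\rho\,ae$, and since $ae$ is $\rho$-related to an idempotent, so is $a$, whence $a\in K$.

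The heart of the proof is the converse. Fixing a congruence pair $(K,\tau)$, I must show $\rho_{(K,\tau)}$ is a congruence. Reflexivity is clear since $aa^{-1}\in\E(S)\subseteq K$, and symmetry follows from the symmetry of $\tau$ together with $(ab^{-1})^{-1}=ba^{-1}$ and the inverse-closure of $K$. Transitivity is where both pair axioms enter: from $ab^{-1},bc^{-1}\in K$ one computes $(ab^{-1})(bc^{-1})=a(b^{-1}b)c^{-1}=(ac^{-1})\,g\in K$ with $g=cb^{-1}bc^{-1}$ idempotent; normality of $\tau$ applied to $a^{-1}a\,\tau\,b^{-1}b$ (conjugating by $c^{-1}$) gives $(x^{-1}x,g)\in\tau$ for $x=ac^{-1}$, and the closure condition of the congruence pair then forces $x=ac^{-1}\in K$, which with transitivity of $\tau$ gives $(a,c)\in\rho_{(K,\tau)}$. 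Left and right compatibility run on the same mechanism: the $K$-coordinate $c(ab^{-1})c^{-1}$ (resp.\ $(ab^{-1})(bcc^{-1}b^{-1})$) stays in $K$ by self-conjugacy (resp.\ by subsemigroup closure), while the $\tau$-coordinate $c^{-1}(a^{-1}a)c\,\tau\,c^{-1}(b^{-1}b)c$ is handled by normality of $\tau$ (and by centrality of the idempotents in the Clifford case).

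Finally I check the three identities. The equalities $\tr\rho_{(K,\tau)}=\tau$ and $\Ker\rho_{(K,\tau)}=K$ come out by unwinding the definitions on idempotents and on elements $\rho_{(K,\tau)}$-related to $aa^{-1}$, respectively. For $\rho_{(\Ker\rho,\tr\rho)}=\rho$, the inclusion $\subseteq$ is immediate from compatibility of $\rho$; the reverse inclusion needs a short manipulation, deducing from $ab^{-1}\,\rho\,f$ (with $f$ idempotent) and $a^{-1}a\,\rho\,b^{-1}b$ that $a\,\rho\,fa$ and $b\,\rho\,fa$, whence $a\,\rho\,b$. The main obstacle is the converse direction, specifically transitivity and compatibility of $\rho_{(K,\tau)}$, because these are precisely the steps that consume the normality of $\tau$ and the closure axiom of the congruence pair; everything else is bookkeeping with the inverse-semigroup identities.
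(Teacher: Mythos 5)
The paper offers no proof of this statement to compare against: it is imported verbatim from Howie's book (Theorem 5.3.3), with the congruence-pair notion recalled just above it in a Clifford-adapted form. So your proposal can only be judged on its own merits. On those terms, your sketch follows the standard kernel--trace route, and the parts you spell out are sound in the Clifford setting the paper actually works in: the forward direction (including the pair axiom via $a = a(a^{-1}a)\ \rho\ ae$), the transitivity argument via $(ab^{-1})(bc^{-1}) = (ac^{-1})g$ with $g = cb^{-1}bc^{-1}$, normality of $\tau$ conjugated by $c^{-1}$, and the closure axiom, and the unwinding of the three identities $\tr\rho_{(K,\tau)}=\tau$, $\Ker\rho_{(K,\tau)}=K$, $\rho_{(\Ker\rho,\tr\rho)}=\rho$ all check out.

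There are, however, two gaps relative to the theorem \emph{as stated}, i.e., for an arbitrary inverse semigroup. First, Howie's definition of a congruence pair in the general inverse case includes a second axiom, $k\in K \Rightarrow kk^{-1}\ \tau\ k^{-1}k$, which is vacuous for Clifford semigroups (where $kk^{-1}=k^{-1}k$) and hence absent from the paper's definition; your forward direction never verifies it and your converse never uses it, so your argument only matches the Clifford-adapted notion. Second, and more substantively, your claim that ``left and right compatibility run on the same mechanism'' fails in general. For right compatibility the $\tau$-coordinate is $c^{-1}(a^{-1}a)c\ \tau\ c^{-1}(b^{-1}b)c$, where normality applies because the conjugator $c$ is the \emph{same} on both sides. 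For left compatibility you need $a^{-1}(c^{-1}c)a\ \tau\ b^{-1}(c^{-1}c)b$, where the conjugators $a$ and $b$ are \emph{different} elements, and normality of $\tau$ alone does not yield this; in Howie's proof this is precisely the hard step, handled by a dedicated lemma (from $ab^{-1}\in K$ and $a^{-1}a\ \tau\ b^{-1}b$ deduce $a^{-1}ea\ \tau\ b^{-1}eb$ for every idempotent $e$) whose proof consumes both pair axioms. Your parenthetical fallback --- centrality of idempotents --- does repair it, since then $a^{-1}(c^{-1}c)a = (a^{-1}a)(c^{-1}c)$ and compatibility of $\tau$ on $\E(S)$ finishes, but that repair is available only for Clifford semigroups. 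In short: your sketch proves the theorem in the form the paper uses it, but not in the stated generality; to cover arbitrary inverse semigroups you must add the second pair axiom and the missing conjugation lemma for left compatibility.
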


\medskip

\begin{lemma}\label{le:cong-pair}  
Let $s$ be an $\E\left(S\right)$-invariant solution on $S$, $\tau = \E(S)\times\E(S)$, and $K$ the kernel of $s$. 
Then, $\left(K, \tau\right)$ is a congruence pair of $S$.
\begin{proof}
At first, let us observe that the kernel $K$ of $s$ can be written as
\begin{align*}
    K=\{a\in S\, \mid \, \forall \ e \in \E(S)\quad \theta_e(a) \in \E(S)\}.
\end{align*}
Now, let $a \in S$ and $e \in \E(S)$ such that $ae\in K$. To get the claim it is enough to show that if  $f\in\E(S)$, then $\theta_{f}\left(a\right)\in\E\left(S\right)$, i.e., $a\in K$.
By $1.$ and $5.$ in \cref{lemma_solu_fix_idemp}, we obtain that
\begin{align*}
    \theta_f\left(a\right)  
    = \theta_{ef}\left(a\right)
    = \theta_{ef}\left(ae\right)\in \E(S),
\end{align*}
 which is our claim.
\end{proof}
\end{lemma}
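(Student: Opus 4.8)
The plan is to unwind the definition of a congruence pair into its three obligations and dispatch each in turn, exploiting crucially that $\tau = \E(S)\times\E(S)$ is the \emph{universal} congruence on the semilattice $\E(S)$. First I would note that $K$ is already a normal subsemigroup of $S$ by the preceding theorem, so that hypothesis is in hand. Next I would check that $\tau$ is a normal congruence on $\E(S)$: it is obviously an equivalence compatible with the commutative idempotent multiplication of $\E(S)$, and normality is automatic because, for every $a\in S$ and $e,f\in\E(S)$, the conjugates $a^{-1}ea$ and $a^{-1}fa$ are again idempotents (one checks $(a^{-1}ea)^2 = a^{-1}ea$ using centrality of idempotents), hence trivially $\tau$-related.

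With those two clauses settled, the only substantive obligation is the defining implication: for all $a\in S$ and $e\in\E(S)$, if $ae\in K$ and $(e,a^{-1}a)\in\tau$, then $a\in K$. Since $\tau$ is universal the hypothesis $(e,a^{-1}a)\in\tau$ holds vacuously, so the task reduces to proving $ae\in K \Rightarrow a\in K$. Before doing so I would simplify the description of $K$ itself: because $\E(S)$-invariance forces $\theta_e=\theta_f$ for all idempotents by \cref{lemma_solu_fix_idemp}, the value $\theta_e(a)$ is independent of $e$; taking the admissible idempotent $e=aa^{-1}\leq a$ then shows that ``$\theta_e(a)\in\E(S)$ for all $e\leq a$'' is equivalent to ``$\theta_e(a)\in\E(S)$ for all $e\in\E(S)$''. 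This is exactly the reduced form of $K$ one wants to work with.

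The implication then follows by a short computation using \cref{lemma_solu_fix_idemp} together with centrality of idempotents. Fixing $f\in\E(S)$, I would write $\theta_f(a)=\theta_{ef}(a)$ (since $ef\in\E(S)$ and all $\theta$ on idempotents agree by item $1.$), then $\theta_{ef}(a)=\theta_{ef}(ea)=\theta_{ef}(ae)$ (item $5.$, and $ea=ae$ by centrality), and finally $\theta_{ef}(ae)\in\E(S)$ because $ae\in K$ and $ef$ is an idempotent. Hence $\theta_f(a)\in\E(S)$ for every $f$, i.e. $a\in K$, completing the verification.

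The main obstacle is conceptual rather than computational: recognizing that taking $\tau$ to be the universal congruence trivializes both the normality of $\tau$ and the $\tau$-hypothesis in the congruence-pair condition, so that all the content is concentrated in the closure property $ae\in K\Rightarrow a\in K$. Getting the reduced description of $K$ right (legitimately dropping the restriction ``$e\leq a$'') is the single point where $\E(S)$-invariance is genuinely used, and is where care is needed.
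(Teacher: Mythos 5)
Your proof is correct and takes essentially the same route as the paper: you reduce the description of $K$ by dropping the restriction $e\leq a$ via \cref{lemma_solu_fix_idemp}-$1.$ (the paper states this reduction without proof; your justification via $e=aa^{-1}\leq a$ is exactly right), and you verify the closure property with the identical computation $\theta_f(a)=\theta_{ef}(a)=\theta_{ef}(ae)\in\E(S)$ using items $1.$ and $5.$ of that lemma. The only difference is that you spell out the routine parts the paper leaves implicit, namely that $K$ is a normal subsemigroup by the earlier theorem, that the universal $\tau$ is trivially a normal congruence, and that the $\tau$-hypothesis in the congruence-pair condition is vacuous.
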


\medskip

The following result completely describes idempotent-invariant solutions.
\begin{theor}
Let $s$ be an $\E\left(S\right)$-invariant solution on $S$. Then,  the map $\theta_e$ satisfies \eqref{cond_solu}, for every $e \in \E(S)$, and
\begin{align*}
  \theta_a(b)=\theta_e(a)^{-1}\theta_e(ab), 
\end{align*}
for all $a, b\in S$ and $e\in\E\left(S\right)$. Moreover, there exists the congruence pair $\left(K, \tau\right)$, with $K$ the kernel of $S$ and $\tau=\E(S) \times \E(S)$, such that 
$\theta_e\left(S\right)$ is a system of representatives of the group $S/\rho_{\left(K, \tau\right)}$ and 
$\left(\theta_e\left(a\right), a \right) \in \rho_{\left(K, \tau\right)}$, for all $e \in \E(S)$ and $a\in S$.
\begin{proof}
Initially, \eqref{cond_solu} is satisfied since
\begin{align*}
    \theta_e(a)^{-1}\theta_e(a)\theta_e(ab)=\theta_e(a)^{-1}\theta_e(a)\theta_e(a)\theta_{ea}(b)=\theta_e(a)\theta_{ea}(b)=\theta_e(ab),
\end{align*}
for all $a, b\in S$ and $e\in\E\left(S\right)$. Besides,
\begin{align*}
    \theta_a(b)
    &=\theta_a\left(a^{-1}ab\right)
  &\mbox{by \cref{lemma_solu_fix_idemp}-$5.$}\\
    &=\theta_a\left( a^{-1}\right)\theta_{aa^{-1}}(ab)\\
    &=\theta_{aa^{-1}}(a)^{-1}\theta_{aa^{-1}}(ab), 
    &\mbox{by \cref{prop_properties_theta}-$1.$}\\
    &= \theta_e(a)^{-1}\theta_e(ab) &\mbox{by \cref{lemma_solu_fix_idemp}-$1.$}
\end{align*}
for all $a, b\in S$ and $e\in\E\left(S\right)$. Moreover, by \cref{le:cong-pair}, $\left(K, \tau\right)$ is a congruence pair and so, by \cref{theo_rho},  $\rho_{\left(K, \tau\right)}$ is a congruence such that $\tau=\tr \rho_{\left(K, \tau\right)}$. Besides, by \cite[Proposition 5.3.1]{Ho95}, since $\tr \rho_{\left(K, \tau\right)} = \E(S) \times \E(S)$, $S/\rho_{\left(K, \tau\right)}$ is a group.
Now, let $a\in S$ and $e\in\E\left(S\right)$ and let us check that $\left(\theta_e\left(a\right), a \right) \in \rho_{\left(K, \tau\right)}$ by proving that $a^{-1}\theta_e\left(a\right)\in K$, i.e., $\theta_e\left(a^{-1}\theta_e\left(a\right)\right)\in\E\left(S\right)$. To this end, note that
\begin{align*}
    \theta_e\left(a^{-1}\theta_e\left(a\right)\right)
    &= \theta_e\theta_a\left(a^{-1}\theta_e\left(a\right)\right)
    &\mbox{by \cref{lemma_solu_fix_idemp}-$4.$}\\
    &= \theta_e 
    \left(\theta_a\left(a^{-1}\right)
    \theta_{aa^{-1}}\theta_e\left(a\right)\right)\\
    &= \theta_e 
    \left(\theta_a\left(a^{-1}\right)
    \theta_{aa^{-1}}\left(a\right)\right)&\mbox{by \cref{lemma_solu_fix_idemp}-$4.$}\\
    &= \theta_e 
    \left(\theta_a\left(a^{-1}\right)
    \theta_{a}\left(a^{-1}\right)^{-1}\right),
    &\mbox{by \cref{prop_properties_theta}-$1.$}
\end{align*}
hence, by \cref{lemma_solu_fix_idemp}-$3.$, $\theta_e\left(a^{-1}\theta_e\left(a\right)\right)\in \E\left(S\right)$. 
Now, let us verify that $\theta_e\left(S\right)$ is a system of representatives of $S/\rho_{\left(K, \tau\right)}$. 
Clearly, $\theta_e\left(S\right)\neq\emptyset$ since $\theta_e\left(e\right)\in\E\left(S\right)$. Besides, if $\left(\theta_e\left(b\right), a \right) \in \rho_{\left(K, \tau\right)}$ we have that $a\,\rho_{\left(K, \tau\right)}\, b$, since $\left(\theta_e\left(a\right), a \right) \in \rho_{\left(K, \tau\right)}$. Thus, $ab^{-1}\in K$ and so $\theta_e\left(ab^{-1}\right)\in\E\left(S\right)$.
This implies that
\begin{align*}
\theta_e\left(b\right)  
&
= \theta_e\left(bb^{-1}\right)
\theta_{ebb^{-1}}\left(b\right)\\
&= \theta_e\left(bb^{-1}\right)
\theta_{\theta_e\left(ab^{-1}\right)}\left(b\right)&\mbox{by \cref{lemma_solu_fix_idemp}-$1.$}\\
&= \theta_e\theta_e\left(ab^{-1}\right)
\theta_{\theta_e\left(ab^{-1}\right)}\theta_{eab^{-1}}\left(b\right)&\mbox{by \eqref{eq:id-inv} and \cref{lemma_solu_fix_idemp}-$4.$}\\
&= \theta_e\left(ab^{-1}\right)
\theta_{ab^{-1}}\left(b\right)&\mbox{by \cref{lemma_solu_fix_idemp}-$4.$}\\
&= \theta_e\left(ab^{-1}\right)
\theta_{eab^{-1}}\left(b\right)&\mbox{by \cref{lemma_solu_fix_idemp}-$2.$ and \eqref{eq:P2}}\\
&= \theta_e\left(ab^{-1}b\right)\\
&= \theta_e\left(a\right).&\mbox{by \cref{lemma_solu_fix_idemp}-$5.$}
\end{align*}
Therefore, the claim follows.
\end{proof}
\end{theor}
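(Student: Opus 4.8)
The plan is to establish the three assertions in the order stated: first the two algebraic identities (that each $\theta_e$ satisfies \eqref{cond_solu} and that $\theta_a(b)=\theta_e(a)^{-1}\theta_e(ab)$), and then the structural claim that $\theta_e(S)$ is a transversal of $S/\rho_{(K,\tau)}$. The whole argument rests on \cref{lemma_solu_fix_idemp}-1, namely that $\theta_e=\theta_f$ for all $e,f\in\E(S)$; this makes the symbol $\theta_e$ independent of the chosen idempotent, so that the clause ``for every $e$'' and the single candidate transversal $\theta_e(S)$ are well posed. I would also invoke \cref{le:cong-pair} at once, since it already supplies the congruence pair $(K,\tau)$ with $\tau=\E(S)\times\E(S)$, so the only genuinely new work concerns the identities and the transversal property.

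For \eqref{cond_solu} I would start from $\theta_e(a)^{-1}\theta_e(a)\theta_e(ab)$, expand $\theta_e(ab)=\theta_e(a)\theta_{ea}(b)$ by \eqref{eq:P1}, use $\theta_e(a)^{-1}\theta_e(a)\theta_e(a)=\theta_e(a)$ to simplify, and recombine by \eqref{eq:P1} to recover $\theta_e(ab)$; since idempotents are central, $\theta_e(a)\theta_e(a)^{-1}=\theta_e(a)^{-1}\theta_e(a)$, which puts the identity into exactly the shape of \eqref{cond_solu}. For the formula for $\theta_a(b)$, I would use \cref{lemma_solu_fix_idemp}-5 to write $\theta_a(b)=\theta_a(a^{-1}ab)$, split it as $\theta_a(a^{-1})\theta_{aa^{-1}}(ab)$ by \eqref{eq:P1}, rewrite $\theta_a(a^{-1})=\theta_{aa^{-1}}(a)^{-1}$ by \cref{prop_properties_theta}-1, and finally replace $\theta_{aa^{-1}}$ by $\theta_e$ via \cref{lemma_solu_fix_idemp}-1. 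Both are short linear chains of rewrites.

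For the structural part I would use \cref{theo_rho} to turn $(K,\tau)$ into the congruence $\rho_{(K,\tau)}$ with $\tr\rho_{(K,\tau)}=\tau=\E(S)\times\E(S)$, whence $S/\rho_{(K,\tau)}$ is a group by \cite[Proposition 5.3.1]{Ho95}. The first thing to verify is $(\theta_e(a),a)\in\rho_{(K,\tau)}$: since $\tau$ is the full relation on $\E(S)$, the trace condition is automatic, and it remains to show $a^{-1}\theta_e(a)\in K$ (equivalently $\theta_e(a)a^{-1}\in K$, because $K$ is normal and closed under conjugation). Using the description of $K$ from \cref{le:cong-pair}, this reduces to checking $\theta_e(a^{-1}\theta_e(a))\in\E(S)$; I would compute it as $\theta_e\theta_a(a^{-1}\theta_e(a))$ by \cref{lemma_solu_fix_idemp}-4, expand by \eqref{eq:P1}, simplify $\theta_{aa^{-1}}\theta_e(a)$ to $\theta_{aa^{-1}}(a)$ again by \cref{lemma_solu_fix_idemp}-4, and then recognise $\theta_a(a^{-1})\theta_{aa^{-1}}(a)=\theta_a(a^{-1})\theta_a(a^{-1})^{-1}$ (by \cref{prop_properties_theta}-1), which is idempotent, so its image under $\theta_e$ lies in $\E(S)$ by \cref{lemma_solu_fix_idemp}-3. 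This shows $\theta_e(S)$ meets every $\rho_{(K,\tau)}$-class.

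The main obstacle is the uniqueness half of the transversal statement: that distinct elements of $\theta_e(S)$ lie in distinct classes, i.e. if $(\theta_e(b),a)\in\rho_{(K,\tau)}$ then $\theta_e(b)=\theta_e(a)$. Here I would first note that $a\,\rho_{(K,\tau)}\,b$ (using the already proved $(\theta_e(a),a)\in\rho_{(K,\tau)}$), so $ab^{-1}\in K$ and hence $\theta_e(ab^{-1})\in\E(S)$; then I would collapse $\theta_e(b)$ to $\theta_e(a)$ through a longer chain: split $\theta_e(b)=\theta_e(bb^{-1})\theta_{ebb^{-1}}(b)$, rewrite the idempotent arguments and indices by \eqref{eq:id-inv} and \cref{lemma_solu_fix_idemp}-1,2,4 so as to introduce $\theta_e(ab^{-1})$, apply \eqref{eq:P2} in the form $\theta_{\theta_e(ab^{-1})}\theta_{eab^{-1}}=\theta_{ab^{-1}}$, recombine by \eqref{eq:P1} to obtain $\theta_e(ab^{-1}b)$, and conclude with \cref{lemma_solu_fix_idemp}-5 to reach $\theta_e(a)$. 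The difficulty is purely bookkeeping: each step is essentially forced, but one must track precisely which pentagon identity and which item of \cref{lemma_solu_fix_idemp} applies at each point, and exploit the centrality of the idempotents $e$, $bb^{-1}$, $b^{-1}b$ to move them freely inside the arguments.
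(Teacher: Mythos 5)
Your proposal is correct and follows essentially the same route as the paper's own proof: the same rewriting chains for \eqref{cond_solu} and for $\theta_a(b)=\theta_e(a)^{-1}\theta_e(ab)$, the same reduction of $(\theta_e(a),a)\in\rho_{(K,\tau)}$ to $\theta_e\left(a^{-1}\theta_e(a)\right)\in\E(S)$ via \cref{lemma_solu_fix_idemp}-$4.$, \cref{prop_properties_theta}-$1.$ and \cref{lemma_solu_fix_idemp}-$3.$, and the same collapsing chain for the uniqueness half of the transversal claim. Your explicit remark that $a^{-1}\theta_e(a)\in K$ and $\theta_e(a)a^{-1}\in K$ are interchangeable by normality of $K$ is a small point the paper leaves implicit, and is a welcome clarification rather than a deviation.
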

\medskip

\begin{prop}
Let $s(a,b)=(ab, \theta_a(b))$ and $t(u,v)=(uv, \theta_u(v))$ be two $\E(S)$-invariant solutions on $S$. Then, $s$ and $t$ are isomorphic if, and only if, there exists an isomorphism $\psi$ of $S$ such that $\psi\theta_e=\eta_e\psi$, i.e., $\psi$ sends the system of representatives $\theta_e(S)$ into the other one $\eta_e\left(\psi(S)\right)$, for every $e \in \E(S)$.
\begin{proof}
Indeed, making explicit the condition \eqref{isomor}, we obtain \begin{align*}\psi\left(\theta_e(a)^{-1}\theta_e(ab)\right)=\eta_e\left(\psi(a)\right)^{-1}\eta_e\left(\psi(ab)\right),\end{align*}
for all $a,b \in S$ and $e \in \E(S)$. Applying \cref{lemma_solu_fix_idemp}-$6.$ and taking $b=a^{-1}$, we get $\psi\left(\theta_e(a)^{-1}\right)=\eta_e\left(\psi(a)\right)^{-1}$. Thus, the claim follows.
    \end{proof}
\end{prop}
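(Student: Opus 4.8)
The plan is to prove both directions of the equivalence by unwinding the isomorphism condition \eqref{isomor} for the special input $b=a^{-1}$, using the explicit form $\theta_a(b)=\theta_e(a)^{-1}\theta_e(ab)$ established in the previous theorem. First I would recall that, by the preceding classification, the whole solution $s$ is determined by the single map $\theta_e$ (for any fixed $e\in\E(S)$, since $\theta_e=\theta_f$ by \cref{lemma_solu_fix_idemp}-$1.$), and likewise $t$ is determined by $\eta_e$. Thus the natural guess is that $s\cong t$ precisely when an isomorphism of $S$ identifies the two systems of representatives, i.e.\ intertwines $\theta_e$ with $\eta_e$; the proof makes this precise.

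For the forward direction, I would assume an isomorphism $\psi:S\to S$ realising \eqref{isomor}, namely $\psi\theta_a(b)=\eta_{\psi(a)}\psi(b)$ for all $a,b\in S$. Substituting the explicit formula for both solutions gives
\begin{align*}
\psi\left(\theta_e(a)^{-1}\theta_e(ab)\right)=\eta_e\left(\psi(a)\right)^{-1}\eta_e\left(\psi(ab)\right),
\end{align*}
for all $a,b\in S$ and every $e\in\E(S)$, where on the right I have used that $\eta$ is also idempotent-invariant so $\eta_{\psi(a)}$ may be replaced by $\eta_e$ applied appropriately (this is the step of rewriting $\eta_{\psi(a)}(\psi(b))$ as $\eta_e(\psi(a))^{-1}\eta_e(\psi(ab))$). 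Now I would specialise to $b=a^{-1}$. Since $\psi$ is a homomorphism of the inverse semigroup it commutes with inversion, and $\theta_e(aa^{-1})$, $\eta_e(\psi(a)\psi(a)^{-1})$ are idempotents; using \cref{lemma_solu_fix_idemp}-$6.$ (which gives $\theta_e(a)^{-1}=\theta_{ea}(a^{-1})$ and the analogous identity for $\eta$) the right- and left-hand sides collapse to $\psi\left(\theta_e(a)^{-1}\right)$ and $\eta_e\left(\psi(a)\right)^{-1}$ respectively. Taking inverses of both sides then yields $\psi\theta_e(a)=\eta_e\psi(a)$ for every $a$, i.e.\ $\psi\theta_e=\eta_e\psi$, which is the desired intertwining of the representative systems.

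For the converse, I would start from an isomorphism $\psi$ with $\psi\theta_e=\eta_e\psi$ for every $e\in\E(S)$ and simply verify \eqref{isomor} by direct substitution into the explicit formula:
\begin{align*}
\psi\theta_a(b)=\psi\left(\theta_e(a)^{-1}\theta_e(ab)\right)=\psi\theta_e(a)^{-1}\,\psi\theta_e(ab)=\eta_e\psi(a)^{-1}\,\eta_e\psi(ab)=\eta_{\psi(a)}\psi(b),
\end{align*}
where the second equality uses that $\psi$ is a homomorphism respecting inverses, the third uses the hypothesis $\psi\theta_e=\eta_e\psi$ together with $\psi(ab)=\psi(a)\psi(b)$, and the last is just the explicit formula for $t$ read backwards. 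This confirms $s$ and $t$ are isomorphic.

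The main obstacle I anticipate is the bookkeeping in the forward direction: correctly reducing the two-variable identity to the one-variable intertwining relation requires the right specialisation ($b=a^{-1}$) and careful use of \cref{lemma_solu_fix_idemp}-$6.$ to recognise that the products on each side telescope down to a single inverted term. One must also be attentive that the idempotent-invariance is used to move all the $\theta_{\psi(a)}$-type subscripts to a common $\theta_e$ (and similarly for $\eta$), since only then do the two explicit formulas have a matching shape; the rest of the argument is then routine substitution.
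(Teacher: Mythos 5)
Your proposal is correct and follows essentially the same route as the paper's proof: both substitute the explicit formula $\theta_a(b)=\theta_e(a)^{-1}\theta_e(ab)$ from the description theorem into condition \eqref{isomor}, specialise to $b=a^{-1}$, and invoke \cref{lemma_solu_fix_idemp}-$6.$ to collapse each side to a single inverted term, yielding $\psi\theta_e=\eta_e\psi$. The only difference is cosmetic: you spell out the converse direction as an explicit substitution, whereas the paper leaves it implicit since every step of the computation is reversible.
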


\bigskip

\section{A construction of idempotent-fixed solutions}
In this section, we  deal with a class of solutions different from the idempotent-invariant ones, what we call idempotent-fixed solutions. Bearing in mind that a Clifford semigroup can be seen as a union  of groups satisfying certain properties, it is natural to contemplate  whether it is possible or not to construct a global solution in a Clifford semigroup from solutions obtained in each of its groups. In this regard, in the case of idempotent-fixed solutions, we manage to construct a family of solutions obtained by starting from given solutions on each group. 
\medskip

\begin{defin}
Let $s$ be a solution on $S$. Then, $s$ is \emph{idempotent-fixed} or \emph{$\E(S)$-fixed} if 
\begin{align}\label{fix_idem}
    \theta_a(e)=e,
\end{align}
for all $a \in S$ and $e \in \E(S)$.
\end{defin}
\noindent The maps $\mathcal{I}(a,b)=(ab,b)$ and $\mathcal{F}(a,b)=\left(ab, bb^{-1}\right)$ in \eqref{solu_Clifford} are idempotent-fixed solutions on $S$. Clearly, if $S$ is a Clifford that is not a group, i.e., $|\E(S)|>1$, then a solution on $S$ can not be both idempotent-fixed and idempotent-invariant.

\medskip

The next results contained several properties of idempotent-fixed solutions.
\begin{prop}
    Let $s$ be an idempotent-fixed solution on $S$. Then, $\theta_e=\theta_e\theta_{ae}$, for all $a \in S$ and $e \in \E(S)$. In particular, $\theta_e$ is an idempotent map.
    \begin{proof}
    It follows by $\theta_e=\theta_{\theta_a(e)}\theta_{ae}=\theta_e\theta_{ae}$, for all $a \in S$ and $e \in \E(S)$. Taking $a=e$, we obtain that the map $\theta_e$ is idempotent.
    \end{proof}
\end{prop}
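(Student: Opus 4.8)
The plan is to prove both assertions directly from the defining identity \eqref{eq:P2} of a solution, together with the idempotent-fixed property \eqref{fix_idem}, without ever unwinding the semigroup product explicitly.

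First I would establish the identity $\theta_e = \theta_e\theta_{ae}$. The key observation is that \eqref{eq:P2} reads $\theta_{\theta_a(b)}\theta_{ab} = \theta_b$ for all $a,b\in S$. Specializing this to $b = e\in\E(S)$ yields $\theta_{\theta_a(e)}\theta_{ae} = \theta_e$. At this point the hypothesis does the decisive work: since $s$ is idempotent-fixed, \eqref{fix_idem} gives $\theta_a(e) = e$, and substituting this into the left-hand side turns $\theta_{\theta_a(e)}$ into $\theta_e$, producing $\theta_e\theta_{ae} = \theta_e$. This is exactly the claimed identity, and it holds for every $a\in S$ and every $e\in\E(S)$.

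For the final sentence, that $\theta_e$ is idempotent, I would simply specialize the identity just obtained to the case $a = e$. Because $S$ is Clifford, $e\in\E(S)$ satisfies $ee = e$, so $ae = ee = e$ and the identity $\theta_e = \theta_e\theta_{ae}$ collapses to $\theta_e = \theta_e\theta_e = \theta_e^2$. Hence $\theta_e$ is an idempotent map, as required.

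There is essentially no serious obstacle here: both steps are one-line substitutions into \eqref{eq:P2}. The only point that requires a moment's care is the correct bookkeeping of the indices in \eqref{eq:P2}—matching $b$ with $e$ and checking that the subscript $ab$ becomes $ae$—and the innocuous use of $ee = e$ in the Clifford (indeed, merely idempotent) setting to justify $ae = e$ when $a = e$. Everything else is immediate from the idempotent-fixed hypothesis.
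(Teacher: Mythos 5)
Your proof is correct and is exactly the paper's argument: the one-line chain $\theta_e=\theta_{\theta_a(e)}\theta_{ae}=\theta_e\theta_{ae}$ in the paper is precisely your specialization of \eqref{eq:P2} at $b=e$ followed by the substitution $\theta_a(e)=e$ from \eqref{fix_idem}, and the idempotency of $\theta_e$ is obtained in both cases by taking $a=e$ and using $ee=e$. Nothing to add or correct.
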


\medskip
\begin{prop}\label{solu_fix_idemp}
    Let $s$ be an idempotent-fixed solution on $S$. Then, the following hold:
    \begin{enumerate}
        \item $\theta_a(b) = bb^{-1}\theta_a(b)$, 
        \item  $\theta_a\left(b\right)\theta_a\left(b\right)^{-1} = bb^{-1}$,
        \item $\theta_a(b) = \theta_{abb^{-1}}(b)$,
    \end{enumerate}
        for all $a,b \in S$.
        \begin{proof}
        Let $a,b\in S$. Then, 
        $\theta_a\left(b\right) = 
        \theta_a\left(b\right)\theta_{ab}\left(b^{-1}b\right)
        = \theta_{a}\left(b\right)bb^{-1}$.
        Moreover, we have that $\theta_a\left(b\right)^{-1} = \theta_{ab}\left(b^{-1}\right)$ since
        \begin{align*}
         \theta_a\left(b\right)\theta_{ab}\left(b^{-1}\right)\theta_a\left(b\right)
         = \theta_a\left(bb^{-1}\right)\theta_a\left(b\right)
         = bb^{-1}\theta_a\left(b\right)
         = \theta_a\left(b\right)
        \end{align*}
        and
        \begin{align*}
            \theta_{ab}\left(b^{-1}\right)\theta_a\left(b\right)\theta_{ab}\left(b^{-1}\right)
            &= \theta_{ab}\left(b^{-1}\right)\theta_a\left(bb^{-1}\right)
            = b^{-1}b\,\theta_{ab}\left(b^{-1}\right)
            = \theta_{ab}\left(bb^{-1}\right)\theta_{abb^{-1}b}\left(b^{-1}\right)\\
            &= \theta_{ab}\left(b^{-1}\right).
        \end{align*}
        It follows that
       $
         \theta_a\left(b\right)\theta_a\left(b\right)^{-1}
         = \theta_a\left(b\right)\theta_{ab}\left(b^{-1}\right)
         = \theta_a\left(bb^{-1}\right)
         = bb^{-1}.
       $
        Finally, by $1.$, we have that
        \begin{align*}
            \theta_{abb^{-1}}\left(b\right)
            = bb^{-1}\theta_{abb^{-1}}\left(b\right)
            = \theta_a\left(bb^{-1}\right) \theta_{abb^{-1}}\left(b\right) 
            = \theta_a\left(bb^{-1}b\right)
            = \theta_a\left(b\right)
        \end{align*}
        that completes the proof.
        \end{proof}
\end{prop}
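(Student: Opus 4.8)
The plan is to derive the three identities from the solution axiom \eqref{eq:P1}, the defining property \eqref{fix_idem} of an idempotent-fixed solution, and two structural facts about a Clifford semigroup that will be used repeatedly: idempotents are central, and $bb^{-1}=b^{-1}b$ for every $b$. The recurring trick is to insert the idempotents $b^{-1}b$ or $bb^{-1}$ into the argument of a $\theta$ and then to apply \eqref{eq:P1} in one direction or the other.

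For part $1.$, I would factor $b=b\,(b^{-1}b)$ and apply \eqref{eq:P1} with $c=b^{-1}b$, obtaining $\theta_a(b)=\theta_a(b)\,\theta_{ab}(b^{-1}b)$. Since $b^{-1}b\in\E(S)$, property \eqref{fix_idem} collapses $\theta_{ab}(b^{-1}b)$ to $b^{-1}b$, and then centrality of idempotents together with $b^{-1}b=bb^{-1}$ moves this factor to the left, giving $\theta_a(b)=bb^{-1}\theta_a(b)$.

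For part $2.$, the crux is to first identify the inverse explicitly as $\theta_a(b)^{-1}=\theta_{ab}(b^{-1})$, after which the statement is immediate from $\theta_a(b)\theta_{ab}(b^{-1})=\theta_a(bb^{-1})=bb^{-1}$. To establish the inverse I would verify the two inverse-semigroup axioms for this candidate. One axiom, $\theta_a(b)\theta_{ab}(b^{-1})\theta_a(b)=\theta_a(b)$, follows at once from the identity $\theta_a(b)\theta_{ab}(b^{-1})=bb^{-1}$ combined with part $1.$. The other axiom, $\theta_{ab}(b^{-1})\theta_a(b)\theta_{ab}(b^{-1})=\theta_{ab}(b^{-1})$, is the step I expect to be the main obstacle: after reducing the inner product via \eqref{eq:P1} and \eqref{fix_idem} to $b^{-1}b\,\theta_{ab}(b^{-1})$, one must rewrite $b^{-1}b$ as $\theta_{ab}(bb^{-1})$ and recognize $bb^{-1}b^{-1}=b^{-1}$ so that \eqref{eq:P1} can be run in reverse to fold the product back into $\theta_{ab}(b^{-1})$. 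This is precisely where the Clifford identities $bb^{-1}=b^{-1}b$ and centrality are indispensable.

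Finally, part $3.$ should fall out of part $1.$ and one further application of \eqref{eq:P1}. Applying part $1.$ to the index $abb^{-1}$ gives $\theta_{abb^{-1}}(b)=bb^{-1}\,\theta_{abb^{-1}}(b)$; replacing the leading $bb^{-1}$ by $\theta_a(bb^{-1})$ through \eqref{fix_idem} and recombining with \eqref{eq:P1} yields $\theta_a(bb^{-1}b)=\theta_a(b)$, using $bb^{-1}b=b$. No ideas beyond those of parts $1.$ and $2.$ are required here.
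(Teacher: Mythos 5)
Your proposal is correct and follows essentially the same route as the paper's proof: the same insertion of $b^{-1}b$ via \eqref{eq:P1} for part~$1.$, the same identification $\theta_a(b)^{-1}=\theta_{ab}\left(b^{-1}\right)$ verified through both inverse-semigroup axioms (including the same rewriting of $b^{-1}b$ as $\theta_{ab}\left(bb^{-1}\right)$ and the reversed application of \eqref{eq:P1} using $bb^{-1}b^{-1}=b^{-1}$), and the same derivation of part~$3.$ from part~$1.$ together with \eqref{fix_idem}. Every step you outline, including the one you flag as the main obstacle, is exactly the computation the paper carries out.
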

\medskip

As a consequence of \cref{solu_fix_idemp}-$1.$, if $s$ is an idempotent-fixed solution on the Clifford $S$, it follows that every group in $S$ remains invariant by $\theta_a$, for all $a\in S$. Thus, motivated by the fact that solutions on groups are well-described, it makes sense to provide a method to construct this type of solutions from solutions on each group in $S$. To this end, the inner structure of a Clifford semigroup makes clear that conditions relating to different solutions on the groups of $S$ must be considered.  For instance, \cref{solu_fix_idemp}-$3.$ shows that $\theta_a\left(b\right) =
\theta_{\varphi_{e,f}\left(a\right)}\left(b\right)
$, for all $e,f\in \E(S)$, with $e\geq f$,  and all $a\in G_e$, $b\in G_f$. In light of these observations, we provide the following family of idempotent-fixed  solutions.

\begin{theor}\label{theor_costr_fix}
Let $s^{[e]}(a,b)=\left(ab,\, \theta^{[e]}_a\left(b\right)\right)$ be a solution on $G_e$, for every $e \in \E(S)$. Moreover, for all $e, f\in \E(S)$, let $\epsilon_{e, f}: G_{e} \to G_{f}$ be maps such that $\epsilon_{e, f} = \varphi_{e, f}$ if $e\geq f$. If the following conditions are satisfied
\begin{align}
   \label{1} \theta^{[h]}_{\epsilon_{ef, h}(ab)} &= \theta^{[h]}_{\epsilon_{e, h}(a)\epsilon_{f, h}(b)},\\
    \label{2} \epsilon_{f, h}\theta^{[f]}_{\epsilon_{e, f}(a)}(b) &= \theta^{[h]}_{\epsilon_{e, h}(a)}\epsilon_{f, h}(b),
\end{align}
for all $e, f, h\in \E(S)$ and $a\in G_{e}$ and $b \in G_{f}$, set $$\theta_a(b) := \theta^{[f]}_{\epsilon_{e, f}(a)}(b),$$ for all $a \in G_{e}$ and $b \in G_{f}$. Then, the map $s: S \times S \to S \times S$ given by $s(a,b)=(ab, \theta_a(b))$ is an idempotent-fixed solution on $S$.
\begin{proof}
Let $e,f,h\in \E(S)$, $a\in G_{e}$, $b\in G_{f}$, and $c\in G_{h}$. Then, since $s^{[fh]}$ is a solution on $G_{fh}$, we obtain
\begin{align*}
    \theta_a\left(bc\right)
    &= \theta_{a}\left(\varphi_{f, fh}\left(b\right)
    \varphi_{h, fh}\left(c\right)\right)
    = \theta^{[fh]}_{\epsilon_{e,fh}\left(a\right)}
    \left(\varphi_{f,fh}\left(b\right)\varphi_{h, fh}\left(c\right)\right)\\
    &= \theta^{[fh]}_{\epsilon_{e,fh}\left(a\right)}\varphi_{f,fh}\left(b\right)
    \theta^{[fh]}_{\epsilon_{e,fh}\left(a\right)\varphi_{f,fh}\left(b\right)}\varphi_{f,fh}\left(c\right).
\end{align*}
Besides, we have that
\begin{align*}
    \theta_a\left(b\right)\theta_{ab}\left(c\right)
    &= \theta^{[f]}_{\epsilon_{e,f}\left(a\right)}\left(b\right)
    \theta^{[h]}_{\epsilon_{ef,h}\left(ab\right)}\left(c\right)
    =
    \varphi_{f,fh}\theta^{[f]}_{\epsilon_{e,f}\left(a\right)}\left(b\right)
    \varphi_{h,fh}\theta^{[h]}_{\epsilon_{ef,h}\left(ab\right)}\left(c\right).
\end{align*}
Hence, noting that, by \eqref{1},  
\begin{align*}
 \theta^{[fh]}_{\epsilon_{e,fh}\left(a\right)}\varphi_{f,fh}\left(b\right) = 
\theta^{[fh]}_{\epsilon_{e,fh}\left(a\right)}\epsilon_{f,fh}\left(b\right) =  
\epsilon_{f,fh}\theta^{[f]}_{\epsilon_{e,f}\left(a\right)}\left(b\right) = \varphi_{f,fh}\theta^{[f]}_{\epsilon_{e,f}\left(a\right)}\left(b\right)   
\end{align*}
and
\begin{align*}
    \theta^{[fh]}_{\epsilon_{e,fh}\left(a\right)\varphi_{f,fh}\left(b\right)}\varphi_{f,fh}\left(c\right) 
    &= \theta^{[fh]}_{\epsilon_{e,fh}\left(a\right)\epsilon_{f,fh}\left(b\right)}\epsilon_{f,fh}\left(c\right) \\
    &= \theta^{[fh]}_{\epsilon_{ef,fh}\left(ab\right)}\epsilon_{f,fh}\left(c\right)&\mbox{by \eqref{1}}\\
    &= \epsilon_{h,fh}\theta^{[h]}_{\epsilon_{ef,h}\left(ab\right)}\left(c\right) &\mbox{by \eqref{2}}\\
    &= \varphi_{h,fh}\theta^{[h]}_{\epsilon_{ef,h}\left(ab\right)}\left(c\right),
\end{align*}
it follows that \eqref{eq:P1} is satisfied.
In addition, 
\begin{align*}
    \theta_{\theta_a\left(b\right)}\theta_{ab}\left(c\right)
    &= \theta_{\theta^{[f]}_{\epsilon_{e,f}\left(a\right)}\left(b\right)}\theta^{[h]}_{\epsilon_{ef, h}\left(ab\right)}\left(c\right)\\
    &= \theta^{[h]}_{\epsilon_{f,h}\theta^{[f]}_{\epsilon_{e,f}\left(a\right)}\left(b\right)}\theta^{[h]}_{\epsilon_{ef, h}\left(ab\right)}\left(c\right)\\
    &= \theta^{[h]}_{\theta^{[h]}_{\epsilon_{e,h}\left(a\right)}\epsilon_{f,h}\left(b\right)}
    \theta^{[h]}_{\epsilon_{e,h}\left(a\right)\epsilon_{f,h}\left(b\right)}\left(c\right) &\mbox{by \eqref{2} and \eqref{1}}\\
    &= \theta^{[h]}_{\epsilon_{f,h}\left(b\right)}\left(c\right)&\mbox{$s^{[h]}$ is a solution on $G_h$}\\
    &= \theta_b\left(c\right),
\end{align*}
thus \eqref{eq:P2} holds.
Finally, by \cite[Lemma 11-$1.$]{CaMaMi19}, $\theta_a(f)=\theta^{[f]}_{\epsilon_{e,f}(a)}(f)=f$ and so $s$ is idempotent-fixed.
\end{proof}
\end{theor}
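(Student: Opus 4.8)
The plan is to verify directly that $s(a,b)=(ab,\theta_a(b))$ fulfils the three defining requirements of a solution, namely the associativity of the product together with \eqref{eq:P1} and \eqref{eq:P2}, and then to check the idempotent-fixed property \eqref{fix_idem}. Because $S$ is a Clifford semigroup, associativity comes for free, so the entire content lies in \eqref{eq:P1}, \eqref{eq:P2} and \eqref{fix_idem}. The guiding idea throughout is to decompose each element by the group $G_e$ it belongs to, rewrite products through the connecting homomorphisms $\varphi_{\cdot,\cdot}$ of \cref{theo_Clifford}, transport every subexpression into the single group that contains the relevant product, and only there invoke the fact that each $s^{[e]}$ is a solution on $G_e$ together with the compatibility hypotheses \eqref{1} and \eqref{2}.

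First I would fix $a\in G_e$, $b\in G_f$, $c\in G_h$ and prove \eqref{eq:P1}. On one side $\theta_a(bc)=\theta^{[fh]}_{\epsilon_{e,fh}(a)}(bc)$, and writing $bc=\varphi_{f,fh}(b)\varphi_{h,fh}(c)$ I can expand it by \eqref{eq:P1} for the solution $s^{[fh]}$ on $G_{fh}$. On the other side $\theta_a(b)\theta_{ab}(c)=\theta^{[f]}_{\epsilon_{e,f}(a)}(b)\,\theta^{[h]}_{\epsilon_{ef,h}(ab)}(c)$ is a product of an element of $G_f$ with one of $G_h$, which I would carry into $G_{fh}$ via $\varphi_{f,fh}$ and $\varphi_{h,fh}$. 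The two results are then matched factor by factor: the first factor agrees because $\varphi_{f,fh}=\epsilon_{f,fh}$ (as $fh\le f$) and by the intertwining relation \eqref{2}, while the second factor agrees after replacing $\epsilon_{e,fh}(a)\epsilon_{f,fh}(b)$ by $\epsilon_{ef,fh}(ab)$ using \eqref{1} and then applying \eqref{2} once more.

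Next I would verify \eqref{eq:P2}, i.e. $\theta_{\theta_a(b)}\theta_{ab}=\theta_b$ evaluated on $c\in G_h$. Since $\theta_{ab}(c)\in G_h$, unfolding the definition of $\theta$ puts both subscripts into the shape $\theta^{[h]}_{(\cdot)}$: applying \eqref{2} to the inner index $\epsilon_{f,h}\theta^{[f]}_{\epsilon_{e,f}(a)}(b)$ and \eqref{1} to the index $\epsilon_{ef,h}(ab)$ turns the composite into exactly the left-hand side of \eqref{eq:P2} for $s^{[h]}$, with indices $\theta^{[h]}_{\epsilon_{e,h}(a)}\epsilon_{f,h}(b)$ and $\epsilon_{e,h}(a)\epsilon_{f,h}(b)$; since $s^{[h]}$ is a solution this collapses to $\theta^{[h]}_{\epsilon_{f,h}(b)}=\theta_b$. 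Finally, the idempotent-fixed property is immediate, namely $\theta_a(f)=\theta^{[f]}_{\epsilon_{e,f}(a)}(f)=f$, because a solution on a group fixes the identity of that group by \cite[Lemma 11-$1.$]{CaMaMi19}.

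The main obstacle is the bookkeeping in \eqref{eq:P1}: for each intermediate expression one must keep track of which group it lives in and which connecting homomorphism is required to transport it, and then apply \eqref{1} and \eqref{2} in precisely the right order so that the abstract maps $\epsilon_{\cdot,\cdot}$ and the structural homomorphisms $\varphi_{\cdot,\cdot}$ line up. The hypothesis $\epsilon_{e,f}=\varphi_{e,f}$ for $e\ge f$ is exactly what makes this alignment possible, since it identifies $\epsilon_{\cdot,fh}$ with $\varphi_{\cdot,fh}$ whenever the target idempotent lies below the source; checking that this identification is legitimate at every step is the delicate point, whereas \eqref{eq:P2} and \eqref{fix_idem} are comparatively routine once \eqref{eq:P1} is settled.
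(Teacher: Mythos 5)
Your proposal is correct and follows essentially the same route as the paper: decompose $bc$ in $G_{fh}$ via the structural homomorphisms, transport $\theta_a(b)\theta_{ab}(c)$ into $G_{fh}$, match the two factors using \eqref{1} and \eqref{2} together with $\epsilon_{f,fh}=\varphi_{f,fh}$, then verify \eqref{eq:P2} by rewriting both indices via \eqref{2} and \eqref{1} so that the pentagon identity for $s^{[h]}$ collapses the composite, and finish with \cite[Lemma 11-$1.$]{CaMaMi19} for the idempotent-fixed property. In fact your attribution of the first-factor identity to \eqref{2} is more accurate than the paper's own marginal note, which labels that step as \eqref{1}.
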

\medskip

The following is a class of idempotent-fixed solutions on $S$ that can be constructed through \cref{theor_costr_fix} and includes the solutions $\mathcal{I}(a,b)=(ab,b)$ and $\mathcal{F}(a,b)=\left(ab, bb^{-1}\right)$  in \eqref{solu_Clifford}.
\begin{ex}
Let $s^{[e]}\left(a,b\right) = \left(ab, \gamma^{[e]}\left(b\right)\right)$ be the solution on $G_e$ as in $2.$ of \cref{exs_solutions} with $\gamma^{[e]}$ an idempotent endomorphism of $G_e$, for every $e\in \E(S)$. Then, by choosing maps $\epsilon_{e,f}:G_e\to G_f$, for all $e,f\in \E(S)$ such that $\varphi_{e,f}\gamma^{[e]} = \gamma^{[f]}\varphi_{e,f}$ if $e\geq f$ and $\epsilon_{e,f}\left(x\right) := f$ otherwise,  then conditions \eqref{1} and \eqref{2} are satisfied. Hence, the map
\begin{align*}
 s(a,b)=\left(ab, \gamma^{[f]}(b)\right),   
\end{align*}
for all $a \in G_e$ and $b \in G_f$, is a solution on $S$.
  
\end{ex}
\medskip

As a consequence of \cref{theor_costr_fix}, the following construction provides a subclass of idempotent-fixed solutions in Clifford semigroups in which each group $G_f$ is an epimorphic image of  $G_e$, whenever $f \leq e$, for all $e,f\in \E(S)$.

\begin{cor}
Let $S$ be a Clifford semigroup such that $\varphi_{e,f}$ is an epimorphism, for all $e,f \in \E(S)$ with $f \leq e$. Let $s^{[e]}(a,b) = \left(ab, \theta_a^{[e]}(b)\right)$ be a solution on $G_e$ and set $N_e := \prod\limits_{f\leq e} \ker\varphi_{e,f}$, for every $e\in \E(S)$. Suppose that 
\begin{enumerate}
    \item $\theta_a^{[e]} = \theta_b^{[e]}$, for all $e\in \E(S)$ and all $a,b \in G_e$ with $aN_e = bN_e$,
    \item 
    $\varphi_{e,f}\theta_a^{[e]}(b) = \theta_{\varphi_{e,f}(a)}^{[f]}\varphi_{e,f}(b)$, for all $e,f\in \E(S)$ with $f \leq e$, and all $a,b\in G_e$.
\end{enumerate}
Set $\theta_a(b) := \theta_{b'}^{[f]}(b)$, with $b'\in G_{f}$ such that $\varphi_{f,ef}(b) = \varphi_{e,ef}(a)$, for all $e,f\in \E(S)$, and all $a\in G_e$, $b \in G_f$. Then, the map $s\colon S\times S \rightarrow S \times S$ given by $s(a,b) = (ab,\theta_a(b))$ is an idempotent-fixed  solution on $S$.

\begin{proof}
Initially, by $1.$, note that $\theta_a$ is well-defined, for every $a\in S$. Now, let $e,f\in \E(S)$ and consider $T_{e,f}$ a system of representatives of $\ker\varphi_{f,ef}$ in $G_f$. Since $\varphi_{f,ef}$ is an epimorphism, for every $a \in G_e$, we can define a map $\epsilon_{e,f}(a) := x \in T_{e,f}$, with $\varphi_{e,ef}(a) = \varphi_{f,ef}(x)$. Specifically, in the case that $f \leq e$, it follows that $\epsilon_{e,f} = \varphi_{e,f}$. Therefore, for all $e,f\in \E(S)$ and all $a\in G_e$, $b\in G_f$, it holds  $\theta_a(b) = \theta_{\epsilon_{e,f}(a)}^{[f]}(b)$. Note that, by $1.$, the last equality is independent of the choice of $T_{e,f}$. Moreover, applying properties in \cref{theo_Clifford} of  homomorphisms $\varphi_{e,f}$, for all $e,f\in \E(S)$ with $f\leq e$,  and the assumptions, it is a routine computation to check that conditions~\eqref{1} and~\eqref{2} of \cref{theor_costr_fix} are satisfied.

\end{proof}

\end{cor}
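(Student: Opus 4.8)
The plan is to deduce the statement from \cref{theor_costr_fix}: I will exhibit maps $\epsilon_{e,f}\colon G_e\to G_f$ with $\epsilon_{e,f}=\varphi_{e,f}$ whenever $f\le e$, check that the proposed assignment coincides with $\theta^{[f]}_{\epsilon_{e,f}(a)}(b)$ for $a\in G_e$ and $b\in G_f$, and then verify that these maps satisfy \eqref{1} and \eqref{2}; the conclusion is then immediate from that theorem.

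First I would settle well-definedness. Since $ef\le f$, the hypothesis that $\varphi_{f,ef}$ is an epimorphism guarantees, for each $a\in G_e$, some $b'\in G_f$ with $\varphi_{f,ef}(b')=\varphi_{e,ef}(a)$; two such choices differ by an element of $\ker\varphi_{f,ef}$, one of the factors of $N_f=\prod_{g\le f}\ker\varphi_{f,g}$, so they lie in a single $N_f$-coset, and by the first hypothesis $\theta^{[f]}_{b'}$ depends only on $b'N_f$. Hence $\theta_a(b)$ is unambiguous. Fixing, for every pair $(e,f)$, a transversal $T_{e,f}$ of $\ker\varphi_{f,ef}$ in $G_f$ turns $a\mapsto b'$ into a map $\epsilon_{e,f}$, and when $f\le e$ one has $ef=f$ and $\varphi_{f,ef}=\id$, whence $\epsilon_{e,f}=\varphi_{e,f}$. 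Throughout I would use freely the composition identities $\varphi_{g,k}\varphi_{f,g}=\varphi_{f,k}$ of \cref{theo_Clifford} and the defining relation $\varphi_{h,fh}\epsilon_{f,h}=\varphi_{f,fh}$.

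The verification of \eqref{1} is the lighter half. Both $\epsilon_{ef,h}(ab)$ and $\epsilon_{e,h}(a)\epsilon_{f,h}(b)$ lie in $G_h$; applying the homomorphism $\varphi_{h,efh}$ to each and rewriting everything, via the composition identities, as $\varphi_{e,efh}(a)\varphi_{f,efh}(b)$ shows that the two differ by an element of $\ker\varphi_{h,efh}\subseteq N_h$. As the $\epsilon$'s enter \eqref{1} only as subscripts of $\theta^{[h]}$, the first hypothesis gives at once the required equality of the two maps $\theta^{[h]}$.

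The genuine difficulty, and the step I expect to be the main obstacle, is \eqref{2}, where $\epsilon_{f,h}$ occurs in argument position and the subscript argument above no longer applies on its own. My approach is to apply $\varphi_{h,fh}$ to both sides: using $\varphi_{h,fh}\epsilon_{f,h}=\varphi_{f,fh}$ together with the second hypothesis applied twice (over $fh\le f$ on the left and over $fh\le h$ on the right) collapses both sides to expressions $\theta^{[fh]}_{(-)}(\varphi_{f,fh}(b))$ whose subscripts, $\varphi_{f,fh}(\epsilon_{e,f}(a))$ and $\varphi_{h,fh}(\epsilon_{e,h}(a))$, both map to $\varphi_{e,efh}(a)$ under $\varphi_{fh,efh}$ and are therefore $N_{fh}$-congruent; the first hypothesis then equates them in $G_{fh}$. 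This proves that the two sides of \eqref{2} coincide \emph{modulo} $\ker\varphi_{h,fh}$. Upgrading this to an honest equality in $G_h$ is the crux: the left-hand side sits in the transversal $T_{f,h}$ by construction, so one must ensure the right-hand side does as well. Here I would exploit that, by \cite[Theorem 15]{CaMaMi19}, the idempotent-fixed solution $s^{[h]}$ sends each element into its own coset modulo the governing normal subgroup, so that $\theta^{[h]}_{\epsilon_{e,h}(a)}$ preserves the relevant cosets, and I would make the choice of the transversals $T_{e,f}$ coherent across pairs so that this coset information pins down the representative. Once \eqref{1} and \eqref{2} are secured, \cref{theor_costr_fix} gives that $s$ is an idempotent-fixed solution on $S$, completing the argument.
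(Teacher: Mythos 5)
Your route is the paper's route: define the maps $\epsilon_{e,f}$ through transversals $T_{e,f}$ of $\ker\varphi_{f,ef}$ in $G_f$, observe $\epsilon_{e,f}=\varphi_{e,f}$ when $f\leq e$, and reduce everything to \cref{theor_costr_fix}; your well-definedness argument and your verification of \eqref{1} (both subscripts map to $\varphi_{e,efh}(a)\varphi_{f,efh}(b)$ under $\varphi_{h,efh}$, hence are congruent modulo $\ker\varphi_{h,efh}\subseteq N_h$, and hypothesis $1.$ equates the $\theta^{[h]}$-maps) are exactly the computations the paper compresses into ``a routine computation''. The gap is in your treatment of \eqref{2}, whose location you diagnose correctly: your argument only shows that the two sides agree modulo $\ker\varphi_{h,fh}$, and since the left-hand side lies in $T_{f,h}$, exact equality amounts to $\theta^{[h]}_{\epsilon_{e,h}(a)}\epsilon_{f,h}(b)\in T_{f,h}$. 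But your proposed repair does not work as written. The appeal to \cite[Theorem 15]{CaMaMi19} yields $\theta^{[h]}_x(y)\in yK_h$, where $K_h$ is the kernel of the solution $s^{[h]}$ --- a subgroup a priori unrelated to $\ker\varphi_{h,fh}$ --- so ``preserves the relevant cosets'' invokes the wrong subgroup; and the ``coherent choice of the transversals'' is asserted, never constructed. The sensitivity to the choice is real: take $\E(S)=\{h,f\}$ with $f<h$, $G_h$ of order $2$, $G_f$ trivial, and $s^{[e]}=\mathcal{F}$ on both groups. Hypotheses $1.$ and $2.$ hold (here $N_h=G_h$), $T_{f,h}$ is a singleton $\{t\}\subseteq G_h$ since $\ker\varphi_{h,fh}=\ker\varphi_{h,f}=G_h$, and the instance of \eqref{2} with $(e,f,h):=(h,f,h)$ reads $t=\theta^{[h]}_{a}(t)=h$, which fails whenever $t$ is chosen to be the non-identity element --- even though the resulting global map $s=\mathcal{F}$ is of course a solution.

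The good news is that the congruence you \emph{did} prove is all that is actually needed, so the gap can be closed by rerunning the proof of \cref{theor_costr_fix} instead of applying it as a black box. In that proof, \eqref{2} is invoked in only two ways: (a) with target idempotent $fh$, i.e.\ with outer maps $\epsilon_{f,fh}=\varphi_{f,fh}$ and $\epsilon_{h,fh}=\varphi_{h,fh}$, in which case the exact equality follows from hypothesis $2.$ together with hypothesis $1.$ applied to the $N_{fh}$-congruent subscripts --- precisely the computation you already carry out; and (b) inside the subscript of $\theta^{[h]}$ in the verification of \eqref{eq:P2}, where hypothesis $1.$ together with $\ker\varphi_{h,fh}\subseteq N_h$ upgrades your modulo-$\ker\varphi_{h,fh}$ congruence $\epsilon_{f,h}\theta^{[f]}_{\epsilon_{e,f}(a)}(b)\equiv\theta^{[h]}_{\epsilon_{e,h}(a)}\epsilon_{f,h}(b)$ to the required equality $\theta^{[h]}_{\epsilon_{f,h}\theta^{[f]}_{\epsilon_{e,f}(a)}(b)}=\theta^{[h]}_{\theta^{[h]}_{\epsilon_{e,h}(a)}\epsilon_{f,h}(b)}$. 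With these two observations your argument becomes complete; alternatively you would have to construct transversals invariant in the sense you sketch, which you have not done and which, as the example above shows, cannot be avoided by an arbitrary choice. It is fair to note that the paper's own proof also dismisses this point as routine, so your difficulty pinpoints a genuine subtlety; but as written, your proof of \eqref{2} does not go through.
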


\medskip

Let us observe that the kernel of an idempotent-fixed solution $s$ can be rewritten as
\begin{align*}
    K=\{a \in S \, \mid \, \forall\, e \in \E(S), \,  e \leq a, \ \theta_e(a)=aa^{-1}\}.
\end{align*}
Denoted by $K_e$ the kernel of each solution $s^{[e]}$ on $G_e$,  i.e., the normal subgroup
\begin{align*}
    K_e=\{a \in G_e \, \mid \, \theta^{[e]}_{e}(a)=e\}.
\end{align*}
of $G_e$, we have the following result that clarifies the previous construction in \cref{theor_costr_fix} is not a description.
\begin{prop}
    Let $s$ be an idempotent-fixed solution on $S$ constructed as in \cref{theor_costr_fix} and suppose that $\epsilon_{e,f}(e)=f$, for all $e,f \in \E(S)$ with $e \leq f$. Assume that each $G_e$ admits a solution $s^{[e]}$ and let $K_{e}$ be the kernel of such a map  $s^{[e]}$, for every $e \in \E(S)$.   Then, $K = \displaystyle \bigcup_{e\in \E(S)}\,K_{e}$.
    \begin{proof}
    Indeed, let $a \in K \cap G_e$. Then, we get
    $ e = aa^{-1} = \theta_e(a) = \theta^{[e]}_e(a)$.
    Thus,  $a \in K_e$.  On the other hand, if $a \in K_e$ and $f \in \E(S)$ is such that $f \leq a$, then, since $\epsilon_{e,f}(e)=f$, we obtain $\theta_f(a)=\theta^{[e]}_{\epsilon_{f,e}(f)}(a)=\theta^{[e]}_{e}(a)=e$, i.e., $a \in K$.
    \end{proof}
\end{prop}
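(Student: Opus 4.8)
The plan is to prove the set equality $K = \bigcup_{e \in \E(S)} K_e$ by double inclusion, working class by class through the groups $G_e$ that partition $S$. Since every element $a \in S$ lies in exactly one group $G_e$ (with $e = aa^{-1}$ the identity of that group) by \cref{theo_Clifford}, and since both $K$ and each $K_e$ are collections of such elements, it suffices to fix $e \in \E(S)$ and show that $a \in K \cap G_e$ if and only if $a \in K_e$. Throughout I will use the rewriting of the kernel of an idempotent-fixed solution recorded just before the statement, namely $K = \{a \in S \mid \forall\, e \in \E(S),\ e \leq a,\ \theta_e(a) = aa^{-1}\}$, together with the defining relation $\theta_a(b) := \theta^{[f]}_{\epsilon_{e,f}(a)}(b)$ for $a \in G_e$, $b \in G_f$ coming from \cref{theor_costr_fix}.

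For the forward inclusion, suppose $a \in K \cap G_e$. Then $aa^{-1} = e$ is the identity of $G_e$, and in particular $e \leq a$, so the defining condition of $K$ applies with this idempotent $e$ and yields $\theta_e(a) = aa^{-1} = e$. The key observation is that $e$ and $a$ both lie in $G_e$, so by the construction formula $\theta_e(a) = \theta^{[e]}_{\epsilon_{e,e}(e)}(a) = \theta^{[e]}_e(a)$, since $\epsilon_{e,e} = \varphi_{e,e} = \id_{G_e}$ (as $e \geq e$). Hence $\theta^{[e]}_e(a) = e$, which is exactly the defining condition for membership in $K_e$. Therefore $a \in K_e$.

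For the reverse inclusion, suppose $a \in K_e$ for some fixed $e$, so $a \in G_e$ and $\theta^{[e]}_e(a) = e$. To show $a \in K$ I must verify that $\theta_f(a) = aa^{-1} = e$ for every idempotent $f \in \E(S)$ with $f \leq a$. Fix such an $f$; since $a \in G_e$ the relation $f \leq a$ forces $f \leq e$, i.e. $e \geq f$. Here I use the hypothesis $\epsilon_{e,f}(e) = f$ for $e \leq f$ together with the fact that when $e \geq f$ we have $\epsilon_{f,e} = \varphi_{f,e}$; the computation $\theta_f(a) = \theta^{[e]}_{\epsilon_{f,e}(f)}(a) = \theta^{[e]}_e(a) = e$ then gives the result, since $\varphi_{f,e}(f) = ef = e$. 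Thus every such $f$ satisfies $\theta_f(a) = e = aa^{-1}$, so $a \in K$.

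The main obstacle is purely notational: one must be careful about the direction of the subscripts on the maps $\epsilon_{e,f}$ and $\varphi_{e,f}$ and about which identity plays the role of $aa^{-1}$, since $a \in G_e$ means its group identity is $e$ and the construction mixes source and target groups. The genuine content reduces to two elementary facts — that $\epsilon_{e,e}$ is the identity and that $\epsilon_{f,e}(f) = \varphi_{f,e}(f) = e$ whenever $f \leq e$ — so once the indexing is aligned correctly, both inclusions follow immediately from the definition of $\theta$ in \cref{theor_costr_fix} and the rewritten description of $K$, with no substantial inequality or congruence argument required.
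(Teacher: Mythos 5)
Your skeleton is exactly the paper's: the same double inclusion, the same use of the rewritten kernel, and the same computation $\theta_e(a)=\theta^{[e]}_{\epsilon_{e,e}(e)}(a)=\theta^{[e]}_e(a)$ in the forward direction (which is correct, since $\epsilon_{e,e}=\varphi_{e,e}=\id_{G_e}$). However, there is a genuine error in your justification of the crucial step of the reverse inclusion. You assert that ``when $e\geq f$ we have $\epsilon_{f,e}=\varphi_{f,e}$'' and conclude via $\varphi_{f,e}(f)=ef=e$. This is wrong on two counts. First, \cref{theor_costr_fix} stipulates $\epsilon_{e,f}=\varphi_{e,f}$ only when $e\geq f$, i.e., only for the \emph{downward} maps; with $f\leq e$ the map $\epsilon_{f,e}\colon G_f\to G_e$ goes upward, where the construction leaves $\epsilon$ completely arbitrary --- indeed $\varphi_{f,e}\colon G_f\to G_e$ is not even defined in the paper when $f\leq e$, since \cref{theo_Clifford} provides it only for $e\leq f$. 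Second, the arithmetic itself fails: under $f\leq e$ one has $ef=f$, not $e$, so the purported computation $\varphi_{f,e}(f)=ef=e$ is false. Note also that if $\epsilon_{f,e}(f)=e$ were automatic in this way, the proposition's hypothesis $\epsilon_{e,f}(e)=f$ for $e\leq f$ would be redundant; the entire point of that hypothesis is to constrain the upward maps that \cref{theor_costr_fix} does not pin down, and your closing paragraph (``$\epsilon_{f,e}(f)=\varphi_{f,e}(f)=e$ whenever $f\leq e$'') repeats the confusion.

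The repair is immediate, because the correct justification already appears in your own text: the hypothesis $\epsilon_{e,f}(e)=f$ for $e\leq f$, relabeled with the pair $(f,e)$, gives $\epsilon_{f,e}(f)=e$ whenever $f\leq e$, and then $\theta_f(a)=\theta^{[e]}_{\epsilon_{f,e}(f)}(a)=\theta^{[e]}_e(a)=e=aa^{-1}$, which is precisely the paper's one-line argument. Delete the clause identifying $\epsilon_{f,e}$ with $\varphi_{f,e}$ (and the matching claim at the end), appeal to the hypothesis alone, and your proof coincides with the paper's.
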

\medskip

\begin{que}
Complete a description of all the idempotent-fixed solutions.
\end{que}

\bigskip

To conclude, we observe that not every solution on $S$ lies in the class of idempotent invariant or idempotent-fixed solutions. Indeed, even in Clifford semigroups of low order, it is possible to construct such an example.

\begin{ex}\label{ex_out}
    Let $S=\{1,\,a,\,b\}$ be the Clifford monoid in \cref{ex_monoide_in}. Then, the maps 
    \begin{align*}
       & \theta_1(x)=a, \quad\text{for every $x$} \in S, \\
       & \theta_a=\theta_b: S \to S, \quad \text{given by} \,\,\theta_a(1)=1, \, \,\theta_a(a)=\theta_a(b)=a
    \end{align*}
    give rise to a solution on $S$ that is neither idempotent invariant, nor idempotent fixed.
\end{ex}
\medskip

\begin{que}
Study other classes of solutions on Clifford semigroups, including, for instance, the map in \cref{ex_out}.
\end{que}
\bigskip

\bibliography{bibliography}

\end{document}